\documentclass{amsart}

\usepackage{amsmath}
\usepackage{amsthm}
\usepackage{amssymb,amsfonts,stmaryrd,paralist,tikz}
\usepackage[mathscr]{euscript}
\usetikzlibrary{matrix,arrows,decorations.pathmorphing}

\newcommand{\R}{\mathbb{R}}

\newcommand{\C}{\mathbb{C}}

\newcommand{\CP}{\mathbb{C}\mathrm{P}}

\newcommand{\two}{I\!\!I}
\newcommand{\three}{I\!\!I\!\!I}
\newcommand{\tr}{\mathrm{tr}}

\newtheorem{thm}{Theorem}[section]
\newtheorem*{thm*}{Theorem}
\newtheorem{lem}[thm]{Lemma}
\newtheorem*{lem*}{Lemma}
\newtheorem{cor}[thm]{Corollary}
\newtheorem*{cor*}{Corollary}
\newtheorem{prop}[thm]{Proposition}
\newtheorem*{prop*}{Proposition}

\newtheorem*{defn*}{Definition}

\newtheorem*{question*}{Question}

\newtheorem{bigthm}{Theorem}

\numberwithin{equation}{section}

\usepackage{color}
\definecolor{verydarkblue}{rgb}{0,0,0.4}
\usepackage{hyperref}
\hypersetup{
pdfauthor={Keaton Quinn},
pdftitle={Conformally Flat Structures},
colorlinks=true,linkcolor=verydarkblue,
citecolor=verydarkblue,urlcolor=verydarkblue
}
\renewcommand{\H}{\mathbb{H}}

\begin{document}

\title{Conformally flat structures via hyperbolic geometry}
\author{Keaton Quinn}
\address{Department of Mathematics, Boston College, Chestnut Hill, MA} 
\date{Date: March 29, 2026}

\begin{abstract}
A pair of tensors $(g,B)$ form the induced metric and shape operator of an immersion into hyperbolic space if and only if they satisfy the Gauss-Codazzi equations. 
Such a pair of tensors induce a pair $(\hat{g},\hat{B})$ related to the ideal boundary of hyperbolic space.
Krasnov and Schlenker, and Bridgeman and Bromberg show in the surface case that there is a duality between $(g,B)$ and $(\hat{g},\hat{B})$. 
Moreover, $(g,B)$ solves the Gauss-Codazzi equations if and only if $(\hat{g},\hat{B})$ solve a corresponding set of equations. 
We show a similar duality exists and identify these corresponding equations for an arbitrary dimension, as well as show there exists a unique solution for $\hat{B}$ provided  $\hat{g}$ is locally conformally flat. 
As an application, we offer a proof of the Weyl-Schouten theorem concerning locally conformally flat metrics that factors through hyperbolic geometry. 
\end{abstract}

\maketitle

\section{Introduction}

Given an immersed surface $\Sigma$ in hyperbolic space with induced metric $g$ and shape operator $B$, the Gauss-Codazzi equations for $(g,B)$ relate the intrinsic curvature of $\Sigma$ to the geometry of hyperbolic space.
The equations are
\[
\left\{
\begin{aligned}
Rm(g) &= -\frac{1}{2}g \owedge g + \frac{1}{2}\two \owedge \two \\
d^\nabla B &= 0,
\end{aligned}
\right.
\]
where $\two(X,Y) = g(BX,Y)$ and the Kulkarni-Nomizu product $h \owedge k$ is defined in the next section.
Krasnov and Schlenker in \cite{Krasnov-Schlenker2008} showed that, under certain curvature restrictions, the image of $\Sigma$ under the hyperbolic Gauss map to the sphere induces a pair of tensors $(\hat{g},\hat{B})$ on $\Sigma$ that satisfy a dual set of equations which they call the \emph{Gauss-Codazzi equations at infinity}. 
The algebraic relation $(g,B) \to (\hat{g},\hat{B})$ is invertible and, independently, Bridgeman-Bromberg in \cite{Bridgeman-Bromberg2022} and Schlenker in \cite{Schlenker2017} showed that $(g,B)$ solves the Gauss-Codazzi equations if and only if $(\hat{g},\hat{B})$ solves this dual set of equations. 

Here we generalize this work to an arbitrary dimension.
We show that a pair of tensors $(g,B)$ on an $n$-dimensional manifold $M$ that satisfy the Gauss-Codazzi equations induce, via the same transformations \eqref{TensorsAtInfinity} and \eqref{Tensors} as in 2-dimensions, a pair $(\hat{g},\hat{B})$ on $M$ that solve the dual set of equations
\[
\left\{
\begin{aligned}
Rm(\hat{g}) &= -\frac{1}{2}\hat{g} \owedge \hat{\two} \\
d^{\widehat{\nabla}}\hat{B} &= 0.
\end{aligned}
\right.
\]
When $n=2$ these equations reduce to those of Krasnov and Schlenker from \cite{Krasnov-Schlenker2008}. 
We similarly show that $(g,B)$ solves the Gauss-Codazzi equations if and only if $(\hat{g},\hat{B})$ solves the Gauss-Codazzi equations at infinity (Theorem \ref{DualEquations}).

Osgood and Stowe in \cite{Osgood-Stowe1992} introduced a traceless tensor $\mathrm{OS}(g_2,g_1)$ built from a pair of conformal metrics $g_2 = e^{2u}g_1$.
In \cite{Bridgeman-Bromberg2022}, Bridgeman and Bromberg show that, when $n = 2$, a choice of complex projective structure on $\Sigma$ compatible with $\hat{g}$ produces a solution to the Gauss-Codazzi equations at infinity in the form of $\hat{\two} = 2\mathrm{OS}(\hat{g}) - K(\hat{g})\hat{g}$, where $\mathrm{OS}(\hat{g})$ is the Osgood-Stowe tensor of $\hat{g}$ relative to the flat metric of any projective chart, and where $K(g)$ is the Gaussian curvature of a metric $g$.

The situation is slightly different when $n > 2$.
In our setting, due to Liouville's theorem for conformal maps, the notion of a complex projective structure generalizes to that of a locally conformally flat structure. 
For our purposes, a metric $g$ is locally conformally flat if there is an atlas of charts to $\R^n$ on which $g$ is conformal to the Euclidean metrics of the charts.
We prove the following.

\begin{bigthm}
\label{bigthm1}
A pair $(\hat{g},\hat{B})$ solves the Gauss-Codazzi equations at infinity if and only if the metric $\hat{g}$ is locally conformally flat and
\[
\hat{\two} = 2\mathrm{OS}(\hat{g}) - \frac{S(\hat{g})}{n(n-1)}\hat{g},
\]
where $S$ is the scalar curvature and where $\mathrm{OS}(\hat{g})$ is the Osgood-Stowe tensor of $\hat{g}$ with respect to the induced locally conformally flat structure.
\end{bigthm}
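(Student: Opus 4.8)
The plan is to prove both directions by reducing everything to the two equations defining the Gauss-Codazzi equations at infinity: $Rm(\hat{g}) = -\frac{1}{2}\hat{g}\owedge\hat{\two}$ and $d^{\widehat{\nabla}}\hat{B}=0$. I would first analyze the curvature equation purely algebraically. The condition $Rm(\hat{g}) = -\frac{1}{2}\hat{g}\owedge\hat{\two}$ says precisely that $Rm(\hat{g})$ is a Kulkarni-Nomizu product of $\hat{g}$ with \emph{some} symmetric $2$-tensor; by the standard decomposition of the curvature tensor into Weyl plus a Kulkarni-Nomizu term with the Schouten tensor, this forces the Weyl tensor $W(\hat{g})$ to vanish (when $n\ge 4$), and matching the $\hat{g}\owedge(\cdot)$ components identifies $\hat{\two}$ with $-2$ times the Schouten tensor $P(\hat{g}) = \frac{1}{n-2}\big(\mathrm{Ric} - \frac{S}{2(n-1)}\hat{g}\big)$, i.e.\ $\hat{\two} = -2P(\hat{g})$. (The cases $n=2,3$ need separate handling: for $n=3$ the Weyl tensor is automatically zero and the Codazzi equation becomes the real constraint; for $n=2$ one recovers the Bridgeman-Bromberg result.) So the curvature equation is equivalent to ``$W(\hat{g})=0$ and $\hat{\two}=-2P(\hat{g})$''.

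Next I would bring in the Weyl-Schouten theorem, which is proved earlier via hyperbolic geometry in this paper: $W(\hat{g})=0$ (for $n\ge 4$), together with the relevant Cotton/Codazzi condition for $n=3$, is equivalent to $\hat{g}$ being locally conformally flat. This is exactly where the second Gauss-Codazzi equation at infinity, $d^{\widehat{\nabla}}\hat{B}=0$, enters: with $\hat{\two}=-2P(\hat{g})$, the Codazzi equation $d^{\widehat{\nabla}}\hat{B}=0$ becomes the statement that $P(\hat{g})$ is a Codazzi tensor, which is precisely the Cotton tensor vanishing — the missing ingredient of the Weyl-Schouten criterion in dimension $3$ (and in dimension $\ge 4$ it follows automatically from $W=0$ via the contracted second Bianchi identity). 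Hence the pair of equations at infinity is equivalent to ``$\hat{g}$ locally conformally flat and $\hat{\two}=-2P(\hat{g})$''.

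The final and genuinely substantive step is to reconcile the expression $\hat{\two}=-2P(\hat{g})$ with the claimed formula $\hat{\two}=2\,\mathrm{OS}(\hat{g}) - \frac{S(\hat{g})}{n(n-1)}\hat{g}$. For this I would unwind the definition of the Osgood-Stowe tensor $\mathrm{OS}(g_2,g_1)$ of a conformal pair $g_2 = e^{2u}g_1$ and specialize to $g_1$ flat (which is available, locally, exactly because $\hat g$ is now known to be locally conformally flat): writing $\hat{g}=e^{2u}\delta$ on a conformal chart, one computes that the Osgood-Stowe tensor is the traceless part (with respect to $\hat g$) of $-\big(\nabla^2 u - du\otimes du\big)$, rescaled appropriately, and that this coincides with the traceless part of $-P(\hat{g})$ up to the universal factor; adding back the trace term $-\frac{S}{n(n-1)}\hat g$ (note $\operatorname{tr}_{\hat g}P = \frac{S}{2(n-1)}$, so $-2P$ has $\hat g$-trace $-\frac{S}{n-1}$, matching $-\frac{n}{n(n-1)}S = -\frac{S}{n-1}$) reassembles the full tensor. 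The one subtlety to check carefully is that $\mathrm{OS}(\hat g)$ is well defined independently of which flat chart of the locally conformally flat structure one uses — this is the conformal-naturality property of the Osgood-Stowe tensor under the Liouville-type transition maps, and I expect this invariance, together with pinning down the normalization constants in the definition of $\mathrm{OS}$, to be the main obstacle; everything else is a bookkeeping computation with curvature decompositions and the second Bianchi identity.
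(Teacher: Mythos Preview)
Your argument has a circularity problem relative to the paper's logical structure. You invoke the Weyl--Schouten theorem to deduce that $\hat g$ is locally conformally flat, and you assert it ``is proved earlier via hyperbolic geometry in this paper.'' In fact the paper proves Weyl--Schouten \emph{after} Theorem~A, as a corollary of it (via Lemma~\ref{SchoutenSolves} and Corollary~\ref{MainThm}); deriving Weyl--Schouten from the duality is one of the paper's advertised contributions. So as written your proof is circular. If instead you import Weyl--Schouten as an external classical result, your argument becomes correct for $n\ge 3$, but it then bypasses the paper's point entirely.

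The paper's actual route is quite different and self-contained. For the forward direction ($(\hat g,\hat B)$ solves \eqref{GCInf} $\Rightarrow$ $\hat g$ LCF), it does \emph{not} pass through $W(\hat g)=0$ and curvature decomposition; instead it uses the duality Theorem~\ref{DualEquations} to produce locally an immersion into $\H^{n+1}$, composes with the hyperbolic Gauss map, and shows via Lemma~\ref{HypGaussMap} that $\hat g$ is conformal to the pullback of the round metric (with a scaling trick, Lemma~\ref{scalings}, to handle the eigenvalue $-1$ obstruction). For the formula, the paper shows by direct conformal-change computations (Theorems~\ref{MainThmGauss} and~\ref{MainThmCodazzi}) that $\hat\two = 2\mathrm{OS}(\hat g) - \tfrac{S(\hat g)}{n(n-1)}\hat g$ satisfies both equations of \eqref{GCInf}, and then proves uniqueness of $\hat B$ for $n\ge 3$ (Theorem~\ref{Uniqueness}) via the trace identity \eqref{TraceKN} to conclude this is the \emph{only} solution. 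Your identification $\hat\two=-2P(\hat g)$ and the matching with the Osgood--Stowe expression is exactly the content of Lemma~\ref{SchoutenSolves}, but in the paper that comes afterward, as the bridge \emph{to} Weyl--Schouten rather than \emph{from} it.
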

\noindent
As $S = 2K$ when $n=2$, this theorem generalizes the solutions found by Bridgeman and Bromberg to any dimenion.

As an application of our work, we use this duality to prove a classical theorem concerning the equivalence of a metric being locally conformally flat with the vanishing of certain tensors constructed from the metric.
As such, this work provides an alternative proof of this Schouten-Weyl theorem from a hyperbolic-geometric perspective.

\begin{bigthm}[Schouten-Weyl]
Let $M$ be a manifold of dimension $n \geq 3$.
A metric $g$ on $M$ is locally conformally flat if and only if its Schouten tensor is Codazzi ($n = 3$) or its Weyl tensor vanishes ($n \geq 4$).
\end{bigthm}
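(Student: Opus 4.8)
The plan is to deduce this from Theorem \ref{bigthm1}, by recognizing that the Gauss--Codazzi equations at infinity, once $\hat{B}$ has been eliminated, are exactly the two classical obstructions to conformal flatness. Throughout fix a metric $g$ on $M$ with $n = \dim M \ge 3$, write $P = P(g)$ for its Schouten tensor and $W = W(g)$ for its Weyl tensor, and recall the decomposition $Rm(g) = P \owedge g + W$ in which $W$ is the totally trace-free summand. I will use two facts from curvature algebra: (a) for $n \ge 3$ the map $h \mapsto g \owedge h$ on symmetric $2$-tensors is injective, since the Ricci contraction of $g \owedge h$ is $(n-2)h + (\tr h)\,g$, which determines $h$; in particular $Rm(g)$ has the form $g \owedge h$ for some symmetric $h$ if and only if $W = 0$, and then necessarily $h = P$ (a one-line trace computation, applying (a) to $W = (h-P)\owedge g$); and (b) the contracted second Bianchi identity, which for $n \ge 4$ expresses $\mathrm{div}\,W$ as a nonzero constant multiple of the Cotton tensor $d^{\nabla}P$.

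The first step is a reformulation: \emph{$g$ admits a $g$-self-adjoint $(1,1)$-tensor $\hat{B}$ with $(g,\hat{B})$ solving the Gauss--Codazzi equations at infinity if and only if $W(g) = 0$ and $P(g)$ is Codazzi, in which case $\hat{B}$ is unique and given by $\hat{\two} = -2P(g)$.} Indeed, the first equation $Rm(g) = -\tfrac12\, g \owedge \hat{\two}$ writes $Rm(g)$ as $g \owedge(-\tfrac12\hat{\two})$; by (a) this is solvable for a symmetric $\hat{\two}$ exactly when $W(g) = 0$, and it then forces $-\tfrac12\hat{\two} = P(g)$. Since $\widehat{\nabla}$ is the Levi--Civita connection of $g$ and $\hat{\two}(X,Y) = g(\hat{B}X,Y)$, the second equation $d^{\widehat{\nabla}}\hat{B} = 0$ is equivalent to $d^{\nabla}\hat{\two} = 0$, i.e.\ to $d^{\nabla}P(g) = 0$.

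Now combine this with Theorem \ref{bigthm1}. If $g$ is locally conformally flat then, taking $\hat{\two} = 2\,\mathrm{OS}(g) - \tfrac{S(g)}{n(n-1)}\,g$ with respect to the induced locally conformally flat structure, Theorem \ref{bigthm1} gives that $(g,\hat{B})$ solves the Gauss--Codazzi equations at infinity, so by the reformulation $W(g) = 0$ and $P(g)$ is Codazzi. Conversely, if $W(g) = 0$ and $P(g)$ is Codazzi, the reformulation produces a $\hat{B}$ solving the Gauss--Codazzi equations at infinity, and then Theorem \ref{bigthm1} forces $g$ to be locally conformally flat. Hence $g$ is locally conformally flat if and only if $W(g) = 0$ and $P(g)$ is Codazzi. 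It remains to match this with the stated dimension-dependent formulation: when $n = 3$ the Weyl tensor vanishes identically, so the condition reduces to ``$P(g)$ is Codazzi''; when $n \ge 4$, fact (b) shows $W = 0 \Rightarrow d^{\nabla}P = 0$, so the condition reduces to ``$W(g) = 0$''.

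The genuinely new geometric content is entirely carried by Theorem \ref{bigthm1} (which itself passes through hyperbolic geometry); the rest is standard Riemannian curvature algebra. The step I expect to require the most care is ingredient (b) together with the $n = 3$ versus $n \ge 4$ split: one must check that invoking the contracted Bianchi identity is not circular, which it is not, as it is a pointwise identity valid for all metrics and logically independent of Theorem \ref{bigthm1}; and one must verify cleanly the uniqueness clause $h = P$ in fact (a), i.e.\ that the Schouten decomposition is the only way to write $Rm(g)$ as ``Weyl tensor plus a Kulkarni--Nomizu term built from $g$''.
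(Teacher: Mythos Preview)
Your proposal is correct and follows essentially the same route as the paper: both arguments reduce to the equivalence ``$g$ is locally conformally flat $\Leftrightarrow$ $(g,\hat B)$ solves \eqref{GCInf} for some $\hat B$ $\Leftrightarrow$ $W(g)=0$ and $d^\nabla P=0$,'' invoking Theorem~\ref{bigthm1} for the first biconditional and the Schouten--Weyl decomposition for the second, and then handle the $n=3$ versus $n\ge 4$ split via the identical pair of facts (automatic vanishing of $W$ when $n=3$, and the identity $d^\nabla P = -\tfrac{1}{n-3}\tr_g(\nabla W)$ when $n\ge 4$). The only cosmetic difference is that the paper inserts an intermediate lemma computing $2\,\mathrm{OS}(g) - \tfrac{S(g)}{n(n-1)}g = -2P(g)$ directly, whereas you obtain $\hat\two = -2P$ as a consequence of the injectivity of $h\mapsto g\owedge h$; your packaging is arguably a bit cleaner but the content is the same.
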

\noindent
Our proof relates the Schouten and Weyl tensors to the Gauss-Codazzi equations at infinity and the novelty is that we are able to prove all cases together at once. 

\subsection*{Acknowledgements}
We thank Martin Bridgeman for our extensive conversations on this topic as well as for his helpful feedback on drafts of this paper.
Thanks go to Franco Vargas Pallete for helpful discussions and suggestions, and to Jos\'e M. Espinar for connecting us with a body of work that investigates similar questions. We also thank Clifford Taubes for pointing out an error in an earlier version of Lemma 2.2.

\section{Preliminaries}

\subsection{Tensors}

Let $M$ be a smooth manifold carrying a Riemannian metric $g$. 
The non-degeneracy of $g$ at each point allows us to identify $TM$ with $T^*M$ via the map that sends $v \in T_pM$ to $g_p(v,\cdot) \in T^*_pM$ and this extends to an identification of vector fields and 1-forms. 
Abusing notation, we write $g: TM \to T^*M$ for this map and $g^{-1}:T^*M \to TM$ for its inverse.
If $B:TM \to TM$ is a $(1,1)$-tensor, self-adjoint with respect to $g$, then $gB$ is a symmetric 2-tensor defined by
\[
gB(X,Y) = g(BX,Y),
\]
for all vector fields $X$ and $Y$.
Similarly, if $T$ is a symmetric 2-tensor, then the endomorphism field $g^{-1}T$ defined by 
\[
T(X,Y) = g( (g^{-1}T)X,Y)
\]
is self-adjoint.
The trace of such a $2$-tensor is the trace of the corresponding $(1,1)$-tensor
\[
\tr_g(T) = \tr(g^{-1}T).
\]

The Levi-Civita connection $\nabla$ for $g$ is the unique torsion free connection on $TM$ compatible with $g$.
The exterior covariant derivative $d^\nabla$ is the alternization of this connection.
For $B$ an endomorphism, $d^\nabla B$ is defined by
\begin{align*}
d^\nabla B(X,Y) &= (\nabla_XB)Y - (\nabla_YB)X \\
&= \nabla_X(BY)- \nabla_Y(BX) - B([X,Y]),
\end{align*}
for vector fields $X$ and $Y$.
For a symmetric 2-tensor the derivative is 
\begin{align*}
d^\nabla T(X,Y,Z)
&= (\nabla_YT)(X,Z)- (\nabla_Z T)(X,Y) \\
&= YT(X,Z) - T(\nabla_YX,Z) - T(X,\nabla_YZ) \\
&\phantom{=} - ZT(X,Y) + T(\nabla_ZX,Y) + T(X,\nabla_ZY).
\end{align*}
These two are related via $d^\nabla T(X,Y,Z) = g(X, d^\nabla(g^{-1}T)(Y,Z))$ so that one vanishes if and only if the other does.
The curvature of $\nabla$ is 
\[
R^\nabla(X,Y)Z = \nabla_X\nabla_Y Z - \nabla_Y\nabla_X Z - \nabla_{[X,Y]}Z
\]
and the full curvature tensor 
\[
Rm(X,Y,Z,W) = g(R^\nabla(X,Y)Z,W).
\]
The curvature tensor has several important symmetries.
Given two symmetric 2-tensors $T$ and $S$, there is an operation that produces a 4-tensor with these same symmetries called the Kulkarni-Nomizu product of $T$ and $S$. 
It is defined as
\begin{align*}
T \owedge S(X,Y,Z,W)
&= T(X,W)S(Y,Z) + T(Y,Z)S(X,W) \\
&\phantom{=} - T(X,Z)S(Y,W) - T(Y,W)S(X,Z)
\end{align*}
and is a symmetric and bilinear product on symmetric 2-tensors.

Now suppose $g$ is a Riemannian metric.
Given a 4-tensor $Q$, we can produce a 2-tensor by taking the trace, with respect to $g$, of $Q$ on its first and last indices
\[
\tr_g(Q)(X,Y) = \tr_g( Q(\cdot \, , X,Y, \cdot \,)).
\]
If $T$ is a symmetric 2-tensor, the trace of the Kulkarni-Nomizu product $g \owedge T$ is computed in terms of $T$ and its trace
\begin{equation}
\label{TraceKN}
\tr_g( g \owedge T) = (n-2)T + \tr_g(T)g.
\end{equation}
The Ricci curvature tensor of $g$ is the (symmetric) 2-tensor defined by taking the trace of the curvature tensor $Ric(g) = \tr_g(Rm)$.
The scalar curvature $S(g)$ is the trace of its Ricci tensor.
The sectional curvature $sec$ of the metric is a function on 2-planes in the tangent bundle. 
If $X$ and $Y$ are a basis for such a 2-plane, then
\[
sec(X,Y) = \frac{2Rm(X,Y,Y,X)}{g \owedge g(X,Y,Y,X)},
\]
and $g$ will have constant sectional curvature $K$ if and only if its curvature tensor is of the form 
\[
Rm = \frac{1}{2}K g \owedge g.
\]

\subsection{Tensor Transformations}

Here we list the ways several geometric quantities behave under certain transformations of the metric.
See, for example, \cite{Lee2018} as a general reference.

It is well known how curvatures behave under a conformal change of metric.
We record their behavior in the following lemma.

\begin{lem}
\label{ConfChange}
Let $\tilde{g} = e^{2u}g$ be a metric conformal to a given metric $g$.
Call $\widetilde{\nabla}$ and $\nabla$ the corresponding compatible connections.
We have 
\[
\widetilde{\nabla}_XY = \nabla_XY + du(X)Y + du(Y)X - g(X,Y)\nabla u,
\]
for $\nabla u$ the gradient of $u$ with respect to $g$.
The full curvature tensors are then related by
\[
Rm(\tilde{g}) = e^{2u}Rm(g) - \tilde{g}\owedge(\mathrm{Hess}_g(u) - du^2 + \frac{1}{2}|\nabla u|^2g),
\]
and the scalar curvatures obey
\[
S(\tilde{g}) = e^{-2u}(S(g) -2(n-1)\Delta u  - (n-2)(n-1)|\nabla u|^2).
\]
\end{lem}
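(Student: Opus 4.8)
The plan is to prove the three assertions in the order stated, each building on the previous one. For the connection formula, rather than deriving it I would simply \emph{verify} that the right-hand side
\[
\widetilde{\nabla}_XY := \nabla_XY + A(X,Y), \qquad A(X,Y) := du(X)Y + du(Y)X - g(X,Y)\nabla u,
\]
defines a torsion-free connection compatible with $\tilde{g} = e^{2u}g$, and then appeal to uniqueness of the Levi-Civita connection. Torsion-freeness is immediate because $A(X,Y)$ is symmetric in $X$ and $Y$, so $\widetilde{\nabla}_XY - \widetilde{\nabla}_YX = \nabla_XY - \nabla_YX = [X,Y]$. For compatibility I would expand $X\big(\tilde{g}(Y,Z)\big) = e^{2u}\big(2\,du(X)\,g(Y,Z) + Xg(Y,Z)\big)$ and match it term by term against $\tilde{g}(\widetilde{\nabla}_XY,Z) + \tilde{g}(Y,\widetilde{\nabla}_XZ)$, using only that $\nabla$ is $g$-compatible and that $g(\nabla u,W) = du(W)$; the $\nabla u$-term in $A$ is exactly what is needed to cancel the unwanted $du(X)g(Y,Z)$ contributions.

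For the curvature, I would write $\widetilde{\nabla} = \nabla + A$ and use the general identity for the curvature of a connection perturbed by a $(1,2)$-tensor,
\[
R^{\widetilde{\nabla}}(X,Y)Z = R^\nabla(X,Y)Z + (\nabla_XA)(Y,Z) - (\nabla_YA)(X,Z) + A(X,A(Y,Z)) - A(Y,A(X,Z)),
\]
which follows directly from the definitions once one uses that $\nabla$ is torsion-free. The first-order terms $(\nabla_XA)(Y,Z)$ are computed by the product rule, producing $\mathrm{Hess}_g(u)(X,\cdot) = \nabla_X(du)$ and $\nabla_X(\nabla u)$; the zeroth-order terms $A(X,A(Y,Z))$ expand into expressions in $du$, $g$ and $|\nabla u|^2$. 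Collecting everything and then lowering the last index with $\tilde{g} = e^{2u}g$, the correction to $e^{2u}Rm(g)$ should organize into $-\,\tilde{g} \owedge \big(\mathrm{Hess}_g(u) - du^2 + \tfrac12|\nabla u|^2 g\big)$. I expect this to be the main obstacle: the calculation is routine but requires careful tracking of index positions and of the $e^{2u}$ factors, and one has to recognize that the resulting combination of symmetric $2$-tensors is precisely a Kulkarni--Nomizu product against $\tilde{g}$ and not against $g$.

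Finally, the scalar-curvature formula follows by taking the $\tilde{g}$-trace of the curvature formula twice. Applying the trace identity \eqref{TraceKN} with the metric $\tilde{g}$ gives $\tr_{\tilde{g}}(\tilde{g} \owedge T) = (n-2)T + \tr_{\tilde{g}}(T)\,\tilde{g}$ for any symmetric $2$-tensor $T$. Tracing $Rm(\tilde{g})$ on its first and last indices, the term $e^{2u}Rm(g)$ contributes $Ric(g)$ since the factor $e^{2u}$ cancels against $\tilde{g}^{-1} = e^{-2u}g^{-1}$, and with $T = \mathrm{Hess}_g(u) - du^2 + \tfrac12|\nabla u|^2 g$ one gets $Ric(\tilde{g}) = Ric(g) - (n-2)T - \tr_{\tilde{g}}(T)\,\tilde{g}$. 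Tracing once more, using $\tr_{\tilde{g}} = e^{-2u}\tr_g$ on $2$-tensors together with $\tr_g T = \Delta u + \tfrac{n-2}{2}|\nabla u|^2$ and $\tr_{\tilde{g}}(\tilde{g}) = n$, the total coefficient of $\tr_{\tilde{g}}(T)$ becomes $-(n-2) - n = -2(n-1)$, and substituting yields
\[
S(\tilde{g}) = e^{-2u}\Big(S(g) - 2(n-1)\Delta u - (n-1)(n-2)|\nabla u|^2\Big),
\]
as claimed.
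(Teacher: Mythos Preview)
Your proof sketch is correct and follows the standard textbook approach. Note, however, that the paper does not actually prove this lemma: it is stated as well known, with \cite{Lee2018} given as a general reference, and no proof appears in the paper. So there is nothing to compare against beyond observing that your argument is the conventional one.
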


We will need to know the following transformations under the pullback of a metric by a tensor field, see \cite{Bridgeman-Bromberg2022} for details.

\begin{lem}
\label{TensorChange}
Let $g$ be a Riemannian metric on $M$ and $A$ an endomorphism field of $TM$.
Define the tensor $\hat{g}(X,Y) = g(AX,AY)$.
Then $\hat{g}$ is a metric provided $\lambda = 0$ is not an eigenvalue of $A$. 
In this case,
\[
\widehat{\nabla}_XY = A^{-1}(\nabla_X(AY)),
\]
is a connection compatible with $\hat{g}$. If in addition $d^{\nabla}A = 0$, then this connection is torsion free, so that it is the Levi-Civita connection of $\hat{g}$.
Moreover, the curvature tensors of these two metrics are related by 
\[
Rm(\hat{g})(X,Y,Z,W) = Rm(g)(X,Y,AZ,AW).
\]
\end{lem}

We will need to know how the exterior covariant derivative behaves under a conformal change to the metric.

\begin{lem}
\label{dRel}
Suppose $g$ is a Riemannian metric and $\tilde{g} = e^{2u}g$ is a metric conformal to $g$. Let $\widetilde{\nabla}$ and $\nabla$ be the corresponding Levi-Civita connections. 
If $T$ is a symmetric 2-tensor then the exterior covariant derivatives are related by 
\[
d^{\widetilde{\nabla}} T(X,Y,Z) = d^\nabla T(X,Y,Z) + T \owedge g (\nabla u ,X,Y,Z)
\]
for $\nabla u$ the gradient of $u$ with respect to $g$.
\end{lem}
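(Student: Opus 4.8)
The plan is to compute $d^{\widetilde{\nabla}}T$ directly from the definition of the exterior covariant derivative of a symmetric $2$-tensor, substituting the conformal connection formula from Lemma \ref{ConfChange}. Recall
\[
d^{\widetilde{\nabla}}T(X,Y,Z) = YT(X,Z) - T(\widetilde{\nabla}_YX,Z) - T(X,\widetilde{\nabla}_YZ) - ZT(X,Y) + T(\widetilde{\nabla}_ZX,Y) + T(X,\widetilde{\nabla}_ZY).
\]
Since $T$ is a tensor, the directional-derivative terms $YT(X,Z)$ and $ZT(X,Y)$ are unchanged; only the four connection terms differ between $\widetilde{\nabla}$ and $\nabla$. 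So the difference $d^{\widetilde{\nabla}}T(X,Y,Z) - d^\nabla T(X,Y,Z)$ equals
\[
-T(C(Y,X),Z) - T(X,C(Y,Z)) + T(C(Z,X),Y) + T(X,C(Z,Y)),
\]
where $C(V,W) := \widetilde{\nabla}_V W - \nabla_V W = du(V)W + du(W)V - g(V,W)\nabla u$ is the (symmetric) difference tensor.

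The key step is then to expand each of the four terms using this expression for $C$ and collect. Because $C$ is symmetric in its two arguments, the terms $-T(C(Y,X),Z)$ and $+T(C(Z,X),Y)$ can be paired against $+T(X,C(Z,Y))$ and $-T(X,C(Y,Z))$ respectively; several contributions cancel in pairs. I expect that the $du(V)W$ and $du(W)V$ pieces largely cancel or recombine, and the surviving terms are exactly those coming from the $-g(V,W)\nabla u$ part, namely $g(Y,X)T(\nabla u, Z)$, $g(Y,Z)T(X,\nabla u)$, $g(Z,X)T(\nabla u, Y)$, $g(Z,Y)T(X,\nabla u)$ with appropriate signs. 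Matching signs, this assembles precisely into
\[
T(X,Z)g(\nabla u, Y)\ \text{-type terms} = T\owedge g(\nabla u, X, Y, Z),
\]
using the definition of the Kulkarni--Nomizu product with the first slot holding $\nabla u$: indeed $T\owedge g(\nabla u,X,Y,Z) = T(\nabla u,Z)g(X,Y) + T(X,Y)g(\nabla u,Z) - T(\nabla u,Y)g(X,Z) - T(X,Z)g(\nabla u,Y)$, so one must check the bookkeeping lines up.

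The main obstacle is purely organizational: keeping the six-term antisymmetrization straight and not dropping or mis-signing any of the twelve resulting monomials when $C$ is substituted. There is no conceptual difficulty—no curvature identities or second derivatives of $u$ appear, since $d^{\widetilde{\nabla}}$ only involves first derivatives and $C$ is algebraic in $du$—so the proof is a careful but finite expansion. A useful sanity check along the way: both sides are tensorial in $X,Y,Z$ and antisymmetric in $Y,Z$, and setting $u$ constant recovers $d^{\widetilde\nabla}T = d^\nabla T$, which fixes any overall normalization. One should also verify the claimed slot convention for $T\owedge g$ matches the Kulkarni--Nomizu definition given earlier in the Preliminaries, since the product is symmetric in its two tensor arguments but the placement of $\nabla u$ among the four vector slots matters for signs.
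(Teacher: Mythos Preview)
Your approach is essentially identical to the paper's: expand $d^{\widetilde{\nabla}}T$ from the definition, substitute the conformal difference tensor $C$, and collect what remains into $T\owedge g(\nabla u,X,Y,Z)$. One small correction to your stated expectation: the four surviving terms are \emph{not} solely from the $-g(V,W)\nabla u$ piece of $C$---two of them, namely $+T(X,Y)\,du(Z)$ and $-T(X,Z)\,du(Y)$, come from the $du(V)W$ contributions, while the two $\pm g(Y,Z)T(X,\nabla u)$ terms from the $\nabla u$ piece actually cancel each other---but since you already plan to carry out the full expansion and match against the Kulkarni--Nomizu formula, this will sort itself out immediately.
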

\begin{proof}
We have
\begin{align*}
d^{\widetilde{\nabla}}T(X,Y,Z)
&= YT(X,Z) - T(\widetilde{\nabla}_YX,Z) - T(X, \widetilde{\nabla}_YZ) \\
&\phantom{=} - ZT(X,Y) + T(\widetilde{\nabla}_ZX,Y) + T(X,\widetilde{\nabla}_ZY).
\end{align*}
To simplify, we use how the connections for $\tilde{g}$ and $g$ are related.
Using Lemma \ref{ConfChange} on each relevant term in the derivative leads to a good amount of cancellation, eventually resulting in 
\begin{align*}
d^{\widetilde{\nabla}}T(X,Y,Z)
&=YT(X,Z) - T(\nabla_YX,Z) - T(X, \nabla_YZ) \\
&\phantom{=} - ZT(X,Y) + T(\nabla_ZX,Y) + T(X,\nabla_ZY) \\
&\phantom{=} + T(\nabla u ,Z)g(X,Y) + T(X,Y)du(Z) \\
&\phantom{=} - T(\nabla u, Y)g(X,Z) - T(X,Z)du(Y).
\end{align*}
The first set of the terms form $d^\nabla T(X,Y,Z)$ and after writing $du = g(\nabla u , \cdot)$, we notice the second set of the terms to be $T\owedge g(\nabla u ,X,Y,Z)$.
\end{proof}

\begin{lem}
\label{Hess}
Let $g$ be a Riemannian metric with Levi-Civita connection $\nabla$ and let $u$ be a smooth function, then
\[
d^\nabla \mathrm{Hess}_g(u) (X,Y,Z) = Rm(\nabla u, X, Y, Z)
\]
\end{lem}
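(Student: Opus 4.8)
The plan is to verify the identity by direct computation using the definitions of the exterior covariant derivative and the Hessian, exploiting the Ricci identity (i.e.\ that second covariant derivatives differ by the curvature). First I would recall that $\mathrm{Hess}_g(u) = \nabla du = \nabla^2 u$, so that $\mathrm{Hess}_g(u)(X,Z) = X(Zu) - (\nabla_X Z)u$, or more invariantly $\mathrm{Hess}_g(u)(X,Z) = g(\nabla_X \nabla u, Z)$ where $\nabla u$ is the gradient. Writing $H = \mathrm{Hess}_g(u)$, I would then plug into the formula for $d^\nabla$ of a symmetric $2$-tensor given in the preliminaries:
\[
d^\nabla H(X,Y,Z) = (\nabla_Y H)(X,Z) - (\nabla_Z H)(X,Y).
\]

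Next I would use that $H(X,Z) = g(\nabla_X \nabla u, Z)$ together with compatibility of $\nabla$ with $g$ to write $(\nabla_Y H)(X,Z) = g(\nabla_Y \nabla_X \nabla u, Z) - g(\nabla_{\nabla_Y X}\nabla u, Z)$, using that $H$ is the full covariant derivative of the $1$-form $du$ so its covariant derivative is the iterated covariant derivative $\nabla^3 u$ symmetrized appropriately. The cleanest route: since $H = \nabla(du)$, we have $(\nabla_Y H)(X,Z) = (\nabla^2 (du))(Y;X,Z)$ and the expression $(\nabla_Y H)(X,Z) - (\nabla_Z H)(X,Y)$ is exactly the antisymmetrization in the last two slots of $\nabla^2(du)$ — but because $du$ is the derivative of a function, the Hessian $H = \nabla du$ is already symmetric, so this antisymmetrization picks up precisely the curvature term via the Ricci identity applied to the $1$-form $du$. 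Concretely, $(\nabla_Y \nabla_Z - \nabla_Z \nabla_Y)(du)(X) - (\nabla_{[Y,Z]} du)(X) = -(du)(R^\nabla(Y,Z)X)$, and after reorganizing the Christoffel terms one lands on $Rm(\nabla u, X, Y, Z)$.

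The step I expect to be the main bookkeeping obstacle is correctly tracking the index conventions: the formula for $d^\nabla T$ has $X$ in the first (passive) slot and antisymmetrizes over $Y,Z$, whereas the Ricci identity most naturally antisymmetrizes over the two derivative directions, so I must be careful to match slots and signs, and to confirm the placement of $\nabla u$ in the first slot of $Rm$ rather than elsewhere (this uses the pair symmetry $Rm(\nabla u, X, Y, Z) = Rm(Y,Z,\nabla u, X)$). I would finish by contracting with $g$: since $(du)(R^\nabla(Y,Z)X) = g(\nabla u, R^\nabla(Y,Z)X) = Rm(Y,Z,X,\nabla u)$, the sign works out after using the antisymmetries $Rm(Y,Z,X,\nabla u) = -Rm(Y,Z,\nabla u, X) = Rm(\nabla u, X, Y, Z)$, giving the claimed identity. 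A sanity check is to evaluate both sides in normal coordinates at a point, where Christoffel symbols vanish and the computation reduces to $\partial_Y \partial_X \partial_Z u - \partial_Z \partial_X \partial_Y u$ plus curvature corrections, matching $R_{\;X Y Z}^{k}\,\partial_k u$ as expected.
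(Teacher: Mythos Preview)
Your approach is correct and essentially the same as the paper's: both compute $(\nabla_Y H)(X,Z) - (\nabla_Z H)(X,Y)$ directly and recognize the commutator of covariant derivatives acting on $\nabla u$ (equivalently on $du$) as curvature, then finish with the pair symmetry of $Rm$. The only cosmetic difference is that the paper writes $H(X,Z) = g(\nabla_Z \nabla u, X)$ and tracks the gradient vector field to obtain $g(R^\nabla(Y,Z)\nabla u, X)$ directly, whereas you phrase the same step as the Ricci identity for the $1$-form $du$; your remark about ``antisymmetrization in the last two slots'' is slightly imprecise, but you correctly invoke the symmetry of $H$ to move the slots where they need to be, so the argument goes through.
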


\begin{proof}
To ease notation, we write $\mathrm{Hess}$ for $\mathrm{Hess}_g(u)$ when no confusion is likely.
The derivative is
\[
d^\nabla\mathrm{Hess}_g(u)(X,Y,Z) = (\nabla_Y\mathrm{Hess})(X,Z) - (\nabla_Z \mathrm{Hess})(X,Y)
\]
and $\nabla_Y\mathrm{Hess}$ can be computed via
\begin{align*}
(\nabla_Y\mathrm{Hess})(X,Z)
&= \nabla_Y(\mathrm{Hess}(X,Z)) - \mathrm{Hess}(\nabla_YX,Z) - \mathrm{Hess}(X,\nabla_YZ) \\
&= Yg(\nabla_Z(\nabla u),X) - g(\nabla_Z(\nabla u ), \nabla_YX) - g(\nabla_{\nabla_YZ}(\nabla u),X) \\
&= g(\nabla_Y \nabla_Z (\nabla u ), X) + g(\nabla_Z(\nabla u), \nabla_YX) \\
&\phantom{=} \hspace{1cm}  - g(\nabla_Z(\nabla u ), \nabla_YX) - g(\nabla_{\nabla_YZ}(\nabla u),X) \\
&= g( (\nabla_Y \nabla_Z  - \nabla_{\nabla_YZ})\nabla u, X).
\end{align*}
Similarly, 
\[
(\nabla_Z \mathrm{Hess})(X,Y) = g( (\nabla_Z \nabla_Y  - \nabla_{\nabla_ZY})\nabla u, X).
\]
The exterior covariant derivative is then 
\begin{align*}
d^\nabla\mathrm{Hess}_g(u)(X,Y,Z)
&= g( \nabla_Y \nabla_Z \nabla u - \nabla_Z \nabla_Y \nabla u - \nabla_{\nabla_YZ - \nabla_ZY} \nabla u , X) \\
&= g( R^\nabla(Y,Z)\nabla u ,X),
\end{align*}
where we have used that the connection is torsion free so that $\nabla_YZ - \nabla_ZY = [Y,Z]$.
The lemma then follows from symmetries of the curvature tensor. 
\end{proof}

\subsection{Tangent Bundles and Submanifolds}

The tangent bundle $\pi:TM \to M$ of a smooth manifold records differential information and so second order differential information naturally lives in the double tangent bundle $T(TM) \to TM$.
A connection $\nabla$ on $TM$ lets us split the double tangent bundle into a horizontal and vertical bundle $T(TM) \simeq H \oplus V$.
Here the vertical bundle $V$ is canonically defined as $V = \ker d\pi$ while the horizontal bundle $H$ depends on the connection.
It consists of all tangent vectors to curves $(\gamma(t),X(t))$ in $TM$ for which $D_tX = 0$, where $D_t = \gamma^*\nabla$ is the covariant derivative along the curve $\gamma$.

In fact, with respect to the splitting, if $\alpha(t) = (\gamma(t),X(t))$ is a path in $TM$ for a vector field $X$ along $\gamma$ then $\alpha'(t) = (\gamma'(t),D_tX) \in H \oplus V$.
Now suppose $f:\Sigma \to M$ is a smooth map and that a lift $F: \Sigma \to TM$ of $f$ is given by $F(x) = (f(x),V(x))$ for some vector field $V$ along $f$, i.e., for some $V \in \Gamma(f^*TM)$.
Then the derivative of $F$ is $dF_x(u) = (df_x(u), (f^*\nabla)_u V)$ for $f^*\nabla$ the pullback connection on $f^*TM$.

Now suppose $M$ carries a Riemannian metric with compatible connection $\widetilde{\nabla}$ and let $f: \Sigma \to M$ be an isometric immersion of a smooth manifold with metric $g$ into $M$.
The normal bundle of $\Sigma$ in $M$ is $T^\perp\Sigma$, the orthogonal complement of $T\Sigma$ inside the pullback bundle $f^*TM$.
That is, we have a splitting
\[
f^*TM \simeq T\Sigma \oplus T^\perp\Sigma.
\]
A smooth nonzero section of $T^\perp\Sigma$ is called a normal vector field.
If $X$ and $Y$ are vector fields on $\Sigma$, then $df(Y)$ is a vector field along $f$, and the pullback connection $(f^*\widetilde{\nabla})_Xdf(Y)$ decomposes under this splitting as the sum of a piece tangent to $\Sigma$ and a piece orthogonal. 
The second fundamental form $\alpha$ of the immersion is the portion which is orthogonal, and the Gauss formula relates the tangential piece to the induced Levi-Civita connection $\nabla$ on $\Sigma$.
For vector fields $X$ and $Y$, we have
\[
(f^*\widetilde{\nabla})_Xdf(Y) = df(\nabla_XY) + \alpha(X,Y).
\]
If $\Sigma$ is a hypersurface in $M$ then $T^\perp\Sigma$ is spanned by any unit normal vector field $N$ and $\alpha$ can be written as $\alpha(X,Y) = \two(X,Y)N$ for a symmetric 2-tensor $\two$ on $\Sigma$.
In this case we also refer to $\two$ as the second fundamental form of the immersion. 
The shape operator is defined to be the endomorphism field $B = g^{-1}\two$.

Given the immersion $f$, the map $F: \Sigma \to TM$ by $F = (f,N)$ we call a unit normal lift of $f$.
The Weingarten equation says $(f^*\widetilde{\nabla})_uN = -df(Bu)$ so that, from above, 
\[
dF_x(u) = (df_x(u),-df_x(Bu)).
\]

\subsection{Hyperbolic Space}
We now summarize the theory of families of parallel surfaces in hyperbolic space $\H^n$.
The following exposition can also be found in \cite{Krasnov-Schlenker2008},\cite{Bonini-Espinar-Qing2015},\cite{Abanto-Espinar2019},\cite{Bridgeman-Bromberg2022} from a variety of viewpoints.

The hyperboloid model of hyperbolic space is an embedded hypersurface of Minkowski space for which the second fundamental form is known. 

\begin{lem}
\label{HypAsSub}
Let $\H^n \to \R^{n,1}$ be the hyperboloid model of hyperbolic space as an embedded Riemannian submanifold of Mikowski space. Call $\bar{\nabla}$ the Levi-Civita connection of Minkowski space and $\nabla^{\H}$ that of hyperbolic space. Then the Gauss formula of this embedding is 
\[
\bar{\nabla}_XY = \nabla^{\H}_XY + \left<X,Y\right>N
\]
for $N(p) = p$ the unit normal vector field of the hyperboloid and $\left<X,Y \right>$ the Minkowski metric.
\end{lem}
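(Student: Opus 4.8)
The plan is to work directly with the flat affine structure of Minkowski space, realizing $\H^n = \{p \in \R^{n,1} : \langle p,p\rangle = -1,\ p_0 > 0\}$ and using that the Levi-Civita connection $\bar\nabla$ of $\R^{n,1}$ is ordinary coordinatewise differentiation. In particular, for the position vector field $N(p) = p$ one has $\bar\nabla_X N = X$ for every tangent vector $X$, a fact I will use twice.

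First I would verify that $N$ is a unit normal: differentiating $\langle p, p\rangle \equiv -1$ along curves in $\H^n$ shows $\langle X, N\rangle = 0$ for all $X \in T_p\H^n$, so $N$ spans the normal line, while $\langle N, N\rangle = -1$ says $N$ is timelike of unit length. Since the metric induced on $\H^n$ is Riemannian, hence nondegenerate, the tangential component of $\bar\nabla$ is a torsion-free connection compatible with the induced metric, so by uniqueness it equals $\nabla^\H$; thus $\bar\nabla_X Y = \nabla^\H_X Y + \alpha(X,Y)$ with $\alpha(X,Y)$ the normal component of $\bar\nabla_X Y$.

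It remains to compute $\alpha$. Because the normal line is spanned by the timelike unit vector $N$, the orthogonal projection of a vector $Z$ onto the normal bundle is $-\langle Z, N\rangle N$, so $\alpha(X,Y) = -\langle \bar\nabla_X Y, N\rangle N$. To evaluate the coefficient, differentiate $\langle Y, N\rangle \equiv 0$ in the direction $X$ and use compatibility of $\bar\nabla$ together with $\bar\nabla_X N = X$: this gives $\langle \bar\nabla_X Y, N\rangle + \langle Y, X\rangle = 0$, hence $\alpha(X,Y) = \langle X, Y\rangle N$, which is the asserted formula.

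The one place demanding care is the sign bookkeeping forced by the Lorentzian signature: the normal direction is timelike, so the normal projection acquires the extra minus sign in $-\langle Z,N\rangle N$, and it is exactly this together with $\langle N,N\rangle = -1$ that turns the naively expected $-\langle X,Y\rangle N$ into $+\langle X,Y\rangle N$. As a consistency check one may feed the resulting shape operator $B = \mathrm{id}$ into the Gauss equation and recover that $\H^n$ has constant sectional curvature $-1$.
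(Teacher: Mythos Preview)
Your argument is correct: you identify $N(p)=p$ as the timelike unit normal, use $\bar\nabla_X N = X$, and differentiate $\langle Y,N\rangle\equiv 0$ to extract the normal component, carefully tracking the sign coming from $\langle N,N\rangle=-1$. The paper states this lemma without proof as a standard fact about the hyperboloid model, so there is no proof in the paper to compare against; your write-up supplies exactly the routine verification one would expect.
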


The geodesic flow on the unit tangent bundle $\mathcal{G}^t :U \H^n \to \H^n$ (projected down) has a nice expression in this model. 
For $p$ a point in hyperbolic space and $v$ a unit vector tangent at $p$, the geodesic flow is
\[
\mathcal{G}^t(p,v) = \cosh(t)p + \sinh(t)v.
\]
The derivative of the geodesic flow is a map $d(\mathcal{G}^t) : T(U\H^n) \to T\H^n$.
We can compute it as follows.

\begin{lem}
\label{DerGeo}
Let $\mathcal{G}^t: U\H^n \to \H^n$ be the geodesic flow projected down to hyperbolic space. With respect to the splitting of the double tangent bundle defined above, for $(x,y)$ tangent to $U\H^n$ at $(p,v)$, the derivative of the geodesic flow is 
\[
d(\mathcal{G}^t)_{(p,v)}(x,y) = \cosh(t)x + \sinh(t)y + \sinh(t)\left<x,v\right>p.
\]
\end{lem}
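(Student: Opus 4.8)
The plan is to compute the derivative directly, by representing a tangent vector of $U\H^n$ by a curve and differentiating its image under $\mathcal{G}^t$ in the ambient Minkowski space $\R^{n,1}$, where the flow has the explicit form above.

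Fix $(p,v) \in U\H^n$ and a tangent vector $(x,y)$ to $U\H^n$ at $(p,v)$, written with respect to the horizontal--vertical splitting. By the description of this splitting, choose a curve $\alpha(s) = (\gamma(s),V(s))$ in $U\H^n$ with $\gamma(0)=p$, $\gamma'(0)=x$, $V(0)=v$, and whose vertical component records $D_sV|_{s=0} = y$, where $D_s$ is the covariant derivative along $\gamma$ for $\nabla^{\H}$. Now $\mathcal{G}^t(\alpha(s)) = \cosh(t)\gamma(s) + \sinh(t)V(s)$ is a curve lying in $\H^n \subset \R^{n,1}$, so its velocity at $s=0$ --- which is exactly $d(\mathcal{G}^t)_{(p,v)}(x,y)$ --- may be computed as the ordinary derivative in $\R^{n,1}$:
\[
d(\mathcal{G}^t)_{(p,v)}(x,y) = \cosh(t)\gamma'(0) + \sinh(t)\dot V(0) = \cosh(t)x + \sinh(t)\dot V(0),
\]
where $\dot V(0)$ is the Minkowski derivative of $s \mapsto V(s)$.

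It remains to express $\dot V(0)$ in terms of $y$. Since $\R^{n,1}$ is flat, $\dot V(0) = \bar\nabla$-covariant derivative of $V$ along $\gamma$ at $s=0$, and applying the Gauss formula of Lemma \ref{HypAsSub} along $\gamma$, with $N(\gamma(s)) = \gamma(s)$, gives $\dot V(0) = D_sV|_{s=0} + \langle \gamma'(0),V(0)\rangle\,\gamma(0) = y + \langle x,v\rangle p$. Substituting this into the display above yields
\[
d(\mathcal{G}^t)_{(p,v)}(x,y) = \cosh(t)x + \sinh(t)y + \sinh(t)\langle x,v\rangle p,
\]
as claimed. The only point requiring care --- rather than a genuine obstacle --- is correctly matching the abstract splitting with the concrete curve $\alpha$ and tracking the correction term coming from the Gauss formula; everything else is immediate once the computation is carried out in the ambient space.
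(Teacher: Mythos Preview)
Your proof is correct and follows essentially the same approach as the paper: represent the tangent vector by a curve in $U\H^n$, differentiate $\mathcal{G}^t\circ\alpha$ in the ambient Minkowski space, and then use the Gauss formula of Lemma~\ref{HypAsSub} to convert the ambient derivative $\dot V(0)$ into $D_sV(0) + \langle x,v\rangle p$. The only cosmetic difference is notation.
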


\begin{proof}
Let $\alpha(s) = (\gamma(s),V(s))$ be a curve in $U\H^n$ with $\alpha(0) = (p,v)$ and $\alpha'(0) = (x,y)$. 
Recall this means that $\gamma'(0) = x$ and $D_sV(0) = y$.
We compute
\begin{align*}
d(\mathcal{G}^t)_{(p,v)}(x,y)
&= \left. \frac{d}{ds} \mathcal{G}^t(\gamma(s),V(s)) \right|_{s=0} \\
&= \left. \frac{d}{ds} \cosh(t)\gamma(s) + \sinh(t)V(s) \right|_{s=0} \\
&= \cosh(t)\gamma'(0) + \sinh(t)V'(0)
\end{align*}
where $V'$ is the derivative of $V: (-1,1) \to \R^{n,1}$, which is equal to $\bar{D}_sV(0)$.
By the Gauss equation for hyperbolic space (Lemma \ref{HypAsSub}), this derivative is 
\begin{align*}
\bar{D}_sV(0) 
&= D_sV(0) + \left<\gamma'(0),V(0)\right>\gamma(0) \\
&= y + \left< x, v \right>p,
\end{align*}
and this gives the result. 
\end{proof}

The hyperbolic Gauss map $\mathcal{G}:U\H^n \to S^{n-1}$ is the map that sends $(p,v)$ to the ideal endpoint of the geodesic staring at $p$ traveling in the direction $v$. 
One of multiple equivalent ways to interpret this is to take the geodesics in the hyperboloid model $\H^n$, send them to the Poincar\'e model $\mathbb{B}^n \subset \R^n \subset \R^{n,1}$ with a stereographic projection $\pi: \H^n \to \mathbb{B}^n$ and take a limit as $t\to \infty$ in the Euclidean topology. 
This map $\pi$ has the form
\[
\pi(p) = \frac{1}{1 - \left< p, e_{n+1} \right>}(p + \left<p,e_{n+1}\right> e_{n+1}),
\]
see \cite{Lee2018} for details. 
Doing this, one gets 
\begin{align*}
\mathcal{G}(p,v) 
&= \lim_{t \to \infty} \pi(\mathcal{G}^t(p,v)) \\
&= -\frac{1}{\left<p+v,e_{n+1}\right>} (p+v) - e_{n+1},
\end{align*} 
which belongs to the sphere $S^{n-1} \subset \R^n \subset \R^{n,1}$ since $p+v$ lies in the light cone. 

The derivative of the hyperbolic Gauss map may be computed similarly to the derivative of the geodesic flow. 

\begin{lem}
Let $\mathcal{G}:U\H^n \to S^{n-1}$ be the hyperbolic Gauss map on the hyperboloid model given by 
\[
\mathcal{G}(p,v) = -\frac{1}{\left<p+v,e_{n+1}\right>}(p+v) - e_{n+1},
\]
then the derivative is 
\[
d\mathcal{G}_{(p,v)}(x,y)
=\frac{\left< x + y + \left<x,v\right>p, e_{n+1} \right>}{\left< p + v, e_{n+1} \right>^2}(p+v) - \frac{1}{\left< p + v, e_{n+1} \right>}(x + y + \left<x,v\right>p)
\]
with respect to the splitting $T(U\H^n) \simeq H \oplus V$.
\end{lem}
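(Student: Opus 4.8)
The plan is to imitate the proof of Lemma~\ref{DerGeo}. Given a tangent vector $(x,y)$ to $U\H^n$ at $(p,v)$, I would choose a curve $\alpha(s) = (\gamma(s),V(s))$ in $U\H^n$ with $\alpha(0) = (p,v)$ and $\alpha'(0) = (x,y)$; by the description of the splitting $T(U\H^n)\simeq H\oplus V$, this amounts to requiring $\gamma'(0) = x$ and $D_sV(0) = y$, where $D_s = \gamma^*\nabla^{\H}$. Since $d\mathcal{G}_{(p,v)}(x,y) = \left.\frac{d}{ds}\right|_{s=0}\mathcal{G}(\gamma(s),V(s))$, the problem reduces to differentiating the $\R^{n,1}$-valued function
\[
s \longmapsto -\frac{1}{\left<\gamma(s)+V(s),e_{n+1}\right>}\bigl(\gamma(s)+V(s)\bigr) - e_{n+1}
\]
at $s=0$.

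The only genuinely geometric input is the derivative of the null curve $\gamma+V$ at $s=0$. Regarding $V$ as a curve in $\R^{n,1}$, its ordinary derivative equals the Minkowski covariant derivative $\bar{D}_sV$, and the Gauss formula for the hyperboloid $\H^n\subset\R^{n,1}$ (Lemma~\ref{HypAsSub}), applied along $\gamma$ with unit normal $N(\gamma)=\gamma$, yields
\[
\bar{D}_sV(0) = D_sV(0) + \left<\gamma'(0),V(0)\right>\gamma(0) = y + \left<x,v\right>p.
\]
Hence $\left.\frac{d}{ds}\right|_{s=0}\bigl(\gamma(s)+V(s)\bigr) = x + y + \left<x,v\right>p$; call this vector $w$. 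I would then apply the quotient rule directly: writing $f(s) = \gamma(s)+V(s)$ and the scalar $h(s) = \left<f(s),e_{n+1}\right>$, we have $f(0) = p+v$, $f'(0) = w$, $h(0) = \left<p+v,e_{n+1}\right>$, and $h'(0) = \left<w,e_{n+1}\right>$ by bilinearity of the Minkowski pairing, so differentiating $-f/h - e_{n+1}$ at $s=0$ gives
\[
d\mathcal{G}_{(p,v)}(x,y) = \frac{\left<w,e_{n+1}\right>}{\left<p+v,e_{n+1}\right>^{2}}(p+v) - \frac{1}{\left<p+v,e_{n+1}\right>}\,w.
\]
Substituting $w = x+y+\left<x,v\right>p$ back in recovers the claimed formula.

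I do not anticipate a real obstacle: once the representing curve is in place, the rest is just the quotient rule. The one step requiring attention is the replacement of the ambient derivative of $V$ by $D_sV$ together with the second-fundamental-form correction $\left<x,v\right>p$ — exactly the step used in the proof of Lemma~\ref{DerGeo} — since dropping it would lose a term in the $p$ direction. As a sanity check, the resulting vector can be verified to be tangent to $S^{n-1}$ at $\mathcal{G}(p,v)$ using $\left<p+v,p+v\right> = 0$.
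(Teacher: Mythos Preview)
Your proposal is correct and follows exactly the approach the paper indicates: the paper does not give a separate proof but states that the derivative ``may be computed similarly to the derivative of the geodesic flow,'' i.e., by choosing a representing curve, using the Gauss formula for $\H^n \subset \R^{n,1}$ to get $\bar{D}_sV(0) = y + \langle x,v\rangle p$, and then differentiating the explicit formula via the quotient rule. There is nothing to add.
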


\subsection{Parallel Surfaces}

Let $M$ be a smooth $n$-dimensional manifold and let $f: M \to \H^{n+1}$ be an immersion for which $M$ has a smooth unit normal vector field $N$.
A parallel family of hypersurfaces is obtained by flowing $M$ in its normal direction. 
\\
\\
\noindent
{\bf Convention:} Following the literature, we define the shape operator $B: TM \to TM$ as usual by $g^{-1}\two$ relative to the normal vector field $N$, but we flow in the \emph{opposite} direction $-N$. 
\\
\\
Given $f$ and $N$ one can form $F = (f ,-N)$ the unit normal lift $M \to U\H^{n+1}$.
If one then follows this with the geodesic flow (projected down to $\H^{n+1}$) then we get the desired parallel surfaces.

The following expansion of the induced metric on the parallel hypersurface is well known, see e.g., \cite{Krasnov-Schlenker2007}.
For completeness and clarity, we include a proof tailored to our setting. 

\begin{lem}
Let $f^t: M \to \H^{n+1}$ be defined by $f^t = \mathcal{G}^t \circ F$, then $f^t$ is an immersion provided none of the eigenvalues of $B$ are equal to $-\coth(t)$ and the induced metric is given by 
\[
g_t(X,Y) = g( (\cosh(t)Id + \sinh(t)B)X, (\cosh(t)Id + \sinh(t)B)Y)
\]
\end{lem}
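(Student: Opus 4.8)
The plan is to differentiate $f^t = \mathcal{G}^t \circ F$ directly and read off both assertions from the resulting formula for $df^t$. The first ingredient is the derivative of the unit normal lift $F = (f,-N) : M \to U\H^{n+1}$. By the description of derivatives of lifts in the Tangent Bundles and Submanifolds subsection, together with the Weingarten equation $(f^*\widetilde{\nabla})_u N = -df(Bu)$, and since here the vertical part of the lift is $-N$ rather than $N$, we get, with respect to the horizontal–vertical splitting of $T(U\H^{n+1})$,
\[
dF_x(u) = \bigl(df_x(u),\ df_x(Bu)\bigr).
\]

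Now apply the chain rule $df^t_x(u) = d(\mathcal{G}^t)_{F(x)}\bigl(dF_x(u)\bigr)$ and substitute into Lemma \ref{DerGeo}, taking $(p,v) = (f(x),-N(x))$ and $(x,y) = (df_x(u), df_x(Bu))$. This gives
\[
df^t_x(u) = \cosh(t)\,df_x(u) + \sinh(t)\,df_x(Bu) - \sinh(t)\,\langle df_x(u), N\rangle\, f(x).
\]
The last term vanishes: $df_x(u)$ is tangent to the immersed hypersurface $f(M) \subset \H^{n+1}$ and is therefore orthogonal, for the hyperbolic metric, hence for the restricted Minkowski metric, to the unit normal $N$. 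Writing $A_t = \cosh(t)\,Id + \sinh(t)\,B$ and using linearity of $df_x$, we obtain the compact identity $df^t_x(u) = df_x(A_t u)$.

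The lemma follows from this identity. Since $f$ is an immersion, $df_x$ is injective, so $df^t_x$ is injective if and only if $A_t$ is invertible; the eigenvalues of $A_t$ are $\cosh(t) + \sinh(t)\lambda$ with $\lambda$ ranging over the eigenvalues of $B$, and such a number vanishes exactly when $\lambda = -\coth(t)$, which is the stated condition. Under this condition $f^t$ is an immersion, and its induced metric is
\[
g_t(X,Y) = \langle df^t_x(X), df^t_x(Y)\rangle = \langle df_x(A_t X), df_x(A_t Y)\rangle = g(A_t X, A_t Y),
\]
the final equality because $f$ is isometric, so $df_x$ carries the hyperbolic metric back to $g$. (One could equally phrase this last line through Lemma \ref{TensorChange}, but it is not needed.)

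The computation is short; the only real subtlety, and the step I would check most carefully, is the sign bookkeeping — that the $-N$ convention flips the vertical component of $dF$ to $+df(Bu)$ — and verifying that the extra term $\sinh(t)\langle x,v\rangle p$ coming from Lemma \ref{DerGeo} genuinely drops out by orthogonality of tangent vectors to the normal.
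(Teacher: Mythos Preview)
Your proof is correct and follows essentially the same approach as the paper's: compute $dF$ via the Weingarten equation and the $-N$ convention, compose with the derivative of the geodesic flow from Lemma~\ref{DerGeo}, kill the extra term by orthogonality of $df(u)$ to $N$, and read off $df^t = df \circ A_t$. The only cosmetic difference is that the paper phrases the immersion condition as positivity of the eigenvalues of $A_t^2$ before reducing to $\lambda_i \neq -\coth(t)$, whereas you go directly to injectivity of $A_t$.
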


\begin{proof}
The derivative of $f^t$ may be computed via the chain rule $df^t_x = d\mathcal{G}^t_{F(x)} \circ dF_x$.
Since $F: M \to U\H^{n+1}$, the derivative at $x$ takes values in $T_{F(x)}U\H^{n+1}$.
The connection splits this into the horizontal and vertical space, and via this decomposition, the derivative is 
\[
dF_x(v) = (df_x(v),\nabla_v(-N)) = (df_x(v),df_x(Bv))
\]
(recall the convention to still use $B$ relative to $+N$).
The derivative of the geodesic flow is given by Lemma \ref{DerGeo} as
\[
d\mathcal{G}^t_{(p,v)}(x,y) = \cosh(t)x + \sinh(t)y + \sinh(t)\left< x,v \right> p.
\]
Since $df_x(v)$ and $N(x)$ are orthogonal, we get
\begin{align*}
df^t_x(v) &= d\mathcal{G}^t_{F(x)}(dF_x(v)) \\
&= \cosh(t)df_x(v) + \sinh(t)df_x(Bv) \\
&= df_x(\cosh(t)Id + \sinh(t)B)v.
\end{align*}

This identifies the pullback tensor as $g_t = (f^t)^*g_{\H^{n+1}} = A_t^*g$, for $A_t = \cosh(t)Id + \sinh(t)B$.
This is a metric (equivalently $f^t$ is an immersion) whenever it is positive definite.
That is, whenever the eigenvalues of $A_t^2$ are all positive, which reduces to the condition that the eigenvalues of $A_t$ be nonzero. 
In terms of the eigenvalues $\lambda_i$ for $B$, the requirement is that $\cosh(t) + \sinh(t)\lambda_i \neq 0$, or that $\lambda_i \neq -\coth(t)$ for all $i$. 
This gives the result.
\end{proof}

The following consequence was noted by Epstein in \cite{Epstein1984}.

\begin{cor}
If the eigenvalues of $B$ are in $[-1,1]$ then $f_t$ is an immersion for all $t$ and $M$ may be flown in the forwards and backwards direction for all time. 
\end{cor}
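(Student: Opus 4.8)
The plan is to read off the conclusion directly from the immersion criterion established in the previous lemma, namely that $f^t$ is an immersion precisely when no eigenvalue $\lambda_i$ of $B$ equals $-\coth(t)$. First I would record the elementary fact that $|\coth(t)| > 1$ for every $t \neq 0$: indeed $\coth(t) = \cosh(t)/\sinh(t)$ and $\cosh(t) > |\sinh(t)|$ for $t \neq 0$. Consequently $-\coth(t) \notin [-1,1]$ whenever $t \neq 0$, so if all eigenvalues of $B$ lie in $[-1,1]$ the forbidden condition $\lambda_i = -\coth(t)$ can never occur for $t \neq 0$. For $t = 0$ we have $A_0 = Id$, so $f^0 = f$ is already an immersion (equivalently $g_0 = g$). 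Hence $f^t$ is an immersion for all $t \in \mathbb{R}$, and the parallel family $\{f^t\}$ is defined for all time in both directions.

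There is essentially no obstacle here; the only thing to be slightly careful about is the behavior near $t = 0$, where $\coth(t) \to \pm\infty$, but since the $t = 0$ surface is handled separately by hypothesis this causes no issue. One could optionally remark that the bound $[-1,1]$ is sharp in the sense that an eigenvalue $\lambda$ with $|\lambda| > 1$ would produce a singular time $t$ with $\coth(t) = -\lambda$, so the hypothesis is exactly what is needed for global existence of the flow.
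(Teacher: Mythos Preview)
Your argument is correct and is exactly the approach the paper intends: the corollary is stated without proof as an immediate consequence of the preceding lemma, and your observation that $-\coth(t)\notin[-1,1]$ for all $t\neq 0$ (together with $f^0=f$) is precisely the one-line justification.
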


\subsection{Tensors at Infinity}

Expanding the hyperbolic trig functions in terms of exponentials gives the induced metric on the parallel surface $M_t$ as 
\[
g_t = \frac{1}{4}e^{2t}(g + 2\two + \three) + \frac{1}{2}(g - \three) + \frac{1}{4}e^{-2t}(g - 2\two + \three),
\]
where $\three(X,Y) = g(BX,BY)$.
Define $\hat{g} = g + 2\two + \three$.
The induced metrics are asymptotic to $\hat{g}$ in the sense that their conformal classes converge to that of $\hat{g}$ as $t \to \infty$, since $4e^{-2t}g_t \to g + 2\two + \three$.
Following \cite{Krasnov-Schlenker2008} we refer to $\hat{g}$ as the \emph{metric at infinity}.
This name is mostly accurate in the following sense. 
If $\mathcal{G}: U\H^{n+1} \to S^n$ is the hyperbolic Gauss map $\mathcal{G} = \lim_{t \to \infty} \mathcal{G}^t$, then the composition $f_\infty = \mathcal{G} \circ F$ gives a
surface in $S^n = \partial_\infty(\H^{n+1})$ to which the surfaces $M_t$ can be shown to limit (in a Euclidean topology). 
If we pull back the round metric $\overset{\circ}{g}$ on the sphere by $f_\infty$ then we obtain a metric on $M$ that is conformal to $\hat{g}$.
Recall our convention to flow in the $-N$ direction.

\begin{lem}
\label{HypGaussMap}
Let $f: M \to \H^{n+1}$ be an immersion with a smooth unit normal vector field $N$. If $\hat{g}$ and $f_\infty$ are defined as above then 
\[
f^*_\infty \overset{\circ}{g} = \frac{1}{\langle f - N, e_{n+2}\rangle^2} \ \hat{g}
\]
for $\overset{\circ}{g}$ the round metric on the sphere.
In particular, if $\lambda =-1$ is not an eigenvalue of $B$ then $f_\infty$ is an immersion and $\hat{g}$ and $f_\infty^*\overset{\circ}{g}$ are metrics which are conformal.
\end{lem}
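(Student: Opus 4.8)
The plan is to compute the differential of $f_\infty=\mathcal{G}\circ F$ by the chain rule, substitute into the round metric, and observe that the computation collapses because $f-N$ is a nonzero null vector of Minkowski space orthogonal to $df_x(T_xM)$.

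First I would use the description of the unit normal lift $F=(f,-N)$ from the discussion of parallel surfaces: with respect to the splitting $T(U\H^{n+1})\simeq H\oplus V$ we have $dF_x(u)=(df_x(u),df_x(Bu))$, the vertical component being $(f^*\nabla^{\H^{n+1}})_u(-N)=df_x(Bu)$ by the Weingarten equation (with $B$ taken relative to $+N$, per our convention). Substituting $p=f(x)$, $v=-N(x)$, and horizontal and vertical arguments $df_x(u)$ and $df_x(Bu)$ into the formula for $d\mathcal{G}$ from the earlier lemma (with $e_{n+1}$ replaced by $e_{n+2}$, the ambient space now being $\R^{n+1,1}$), and noting that $df_x(u)\perp N(x)$ inside $T_{f(x)}\H^{n+1}$ — so the term $\langle df_x(u),-N(x)\rangle$ vanishes, the combined argument reduces to $df_x((Id+B)u)$, and $p+v=f-N$ — I obtain
\[
d(f_\infty)_x(u)=\frac{\langle df_x((Id+B)u),e_{n+2}\rangle}{\langle f-N,e_{n+2}\rangle^2}\,(f-N)-\frac{1}{\langle f-N,e_{n+2}\rangle}\,df_x((Id+B)u).
\]

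Since $S^n\subset\R^{n+1}\subset\R^{n+1,1}$ carries the round metric $\overset{\circ}{g}$ as the restriction of the Minkowski form, I would then expand $f_\infty^*\overset{\circ}{g}(u_1,u_2)=\langle d(f_\infty)_x(u_1),d(f_\infty)_x(u_2)\rangle$ into four terms and discard three of them: $\langle f-N,f-N\rangle=\langle f,f\rangle-2\langle f,N\rangle+\langle N,N\rangle=-1-0+1=0$ (using $\langle f,f\rangle=-1$ on the hyperboloid, $N\perp f$, and $\langle N,N\rangle=1$ since $N$ is a unit vector in the spacelike space $T_f\H^{n+1}$), and $\langle f-N,df_x((Id+B)u_i)\rangle=0$ because $df_x((Id+B)u_i)$ is tangent to $M$ and hence orthogonal both to the Minkowski normal $f$ of the hyperboloid and to the normal $N$ of $M$ inside $\H^{n+1}$. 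What survives is
\[
f_\infty^*\overset{\circ}{g}(u_1,u_2)=\frac{1}{\langle f-N,e_{n+2}\rangle^2}\,\langle df_x((Id+B)u_1),df_x((Id+B)u_2)\rangle=\frac{1}{\langle f-N,e_{n+2}\rangle^2}\,g((Id+B)u_1,(Id+B)u_2),
\]
and expanding $g((Id+B)u_1,(Id+B)u_2)=g+2\two+\three$ via self-adjointness of $B$ together with the definitions $\two=g(B\,\cdot\,,\cdot\,)$ and $\three=g(B\,\cdot\,,B\,\cdot\,)$ identifies the right-hand side with $\hat g$, which gives the stated identity.

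For the final assertion of the lemma I would observe that $f-N\ne 0$ (otherwise $-1=\langle f,f\rangle=\langle N,N\rangle=1$) and is null, hence not a multiple of the timelike vector $e_{n+2}$, so that $\langle f-N,e_{n+2}\rangle\ne 0$ and the conformal factor above is a positive function; by Lemma \ref{TensorChange} applied with $A=Id+B$, the tensor $\hat g=A^*g$ is a genuine metric precisely when $0$ is not an eigenvalue of $Id+B$, that is, when $-1$ is not an eigenvalue of $B$, and in that case $f_\infty^*\overset{\circ}{g}$, being a positive multiple of $\hat g$, is positive definite — which is exactly the statement that $f_\infty$ is an immersion — and conformal to $\hat g$. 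I do not expect a genuine obstacle here: the computational heart is just the collapse of the pullback to a single term forced by $f-N$ being null and normal to $df_x(T_xM)$, and the only care needed is with the sign conventions (flowing along $-N$ while keeping $B$ relative to $+N$) and with the fact that every orthogonality relation used already holds for the Minkowski metric restricted to $T_f\H^{n+1}$.
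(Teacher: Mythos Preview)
Your proposal is correct and follows essentially the same route as the paper: both compute $d(f_\infty)$ from the known formulas for $dF$ and $d\mathcal{G}$, then collapse the pullback of the round metric using that $f-N$ is null and orthogonal to $df(T_xM)$; the paper first records $\mathcal{G}^*\overset{\circ}{g}$ in general and then pulls back by $F$, whereas you specialize immediately, but the underlying computation is identical. One tiny wording issue: ``null, hence not a multiple of $e_{n+2}$, so $\langle f-N,e_{n+2}\rangle\ne 0$'' is not quite the right implication---the clean argument is that $e_{n+2}^\perp$ is spacelike, so a nonzero null vector cannot lie in it---but your conclusion is correct and in fact more explicit than the paper's.
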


\begin{proof}
The hyperbolic Gauss map in the hyperboloid model is given by 
\[
\mathcal{G}(p,v) = \frac{-1}{\left< p + v, e_{n+2} \right>} (p + v) - e_{n+2}
\]
which has derivative, using the splitting of $U\H^{n+1}$
\[
d\mathcal{G}_{(p,v)}(x,y) = 
\frac{\left< x + y + \left<x,v\right>p, e_{n+2} \right>}{\left< p + v, e_{n+2} \right>^2}(p+v) - \frac{1}{\left< p + v, e_{n+2} \right>}(x + y + \left<x,v\right>p).
\]
This can be used to compute the pullback tensor $\mathcal{G}^*\overset{\circ}{g}$ as
\[
\mathcal{G}^*\overset{\circ}{g}_{(p,v)}((x,y),(u,w))
= \frac{1}{\left< p+v, e_{n+2}\right>^2}\left( \left<x+y,u+w\right> - \left<x,v\right>\left<u,v\right>\right),
\]
where we have used that $y$ and $w$ are tangent to $U_p\H^{n+1}$ at $v$, meaning $\left<y,v\right> = \left<w,v\right> =0$.
Recalling that the derivative of $F$ is $dF_x(v) = (df_x(v),df_x(Bv))$ and that $df_x$ is orthogonal to $N$ gives 
\begin{align*}
(f_\infty)^*\overset{\circ}{g} = F^*(\mathcal{G}^*)\overset{\circ}{g}
&=\frac{1}{\left< f - N , e_{n+2}\right>^2} \left< df(Id + B),df(Id + B)\right> \\
&= \frac{1}{\left< f - N , e_{n+2}\right>^2} \hat{g},
\end{align*}
as claimed.
If $\lambda = -1$ is not an eigenvalue of $B$, then $\hat{g}$ is a metric and from the equality $(f_\infty)^*\overset{\circ}{g}$ is as well. In particular, if $\lambda =-1$ is not an eigenvalue of $B$ then $f_\infty$ is an immersion.
\end{proof}

Looking again at the expansion of $g_t$ one makes the following definitions:
\[
\hat{\two} = g - \three, \quad \hat{B} = \hat{g}^{-1}\hat{\two}, \quad \hat{\three} = g - 2\two + \three.
\]
In particular,
\begin{equation}
\label{TensorsAtInfinity}
\begin{aligned}
\hat{g} &= g(Id+B, Id + B) \\
\hat{B} &= (Id + B)^{-1}(Id-B)
\end{aligned}
\end{equation}
and these algebraic equalities can be inverted. Namely,
\begin{equation}
\label{Tensors}
\begin{aligned} 
g &= \frac{1}{4}\hat{g}(Id + \hat{B}, Id + \hat{B}) \\
B &= (Id + \hat{B})^{-1}(Id - \hat{B}).
\end{aligned}
\end{equation}
Moreover, $(Id + \hat{B})(Id + B) = 2Id$.

If tensors $(g,B)$ obtained via \eqref{Tensors} from $(\hat{g},\hat{B})$ happen to be the induced metric and shape operator of some immersion in to $\H^{n+1}$ then $(\hat{g},\hat{B})$ will be the metric and shape operator at infinty.

\section{The Gauss-Codazzi Equations}
We now extend to arbitrary dimensions the results of \cite{Schlenker2017} and \cite{Bridgeman-Bromberg2022} on the duality between tensors on immersed hypersurfaces in hyperbolic space and tensors induced by the conformal structure at infinity. 

There is a system of equations that serve as the integrability conditions for a pair of tensors $(g,B)$ to be the induced metric and shape operator of an immersion $M \to \H^{n+1}$.
Given an immersion $f: M \to \H^{n+1}$ with smooth unit normal vector field $N$ along $f$ one has the induced metric $g = f^*g_{\H^{n+1}}$ and second fundamental form $\two$ defined by the Gauss formula
\[
(f^*\nabla^{\H})_Xdf(Y) = df(\nabla_XY) + \two(X,Y)N,
\]
where $\nabla^\H$ and $\nabla$ are the corresponding Levi-Civita connections of $g_{\H^{n+1}}$ and $g$, and where $f^*\nabla^\H$ is the pullback connection on $f^*(T\H^{n+1}) \to M$.
From $\two$, one has the shape operator $B = g^{-1}\two$ and sees that $(g,B)$ obey the Gauss-Codazzi equations
\begin{equation}
\label{GC} \tag{GC}
\left\{
\begin{aligned}
Rm &= -\frac{1}{2}g \owedge g + \frac{1}{2}\two \owedge \two \\
d^\nabla B &= 0.
\end{aligned}
\right.
\end{equation}
Because hyperbolic space has constant sectional curvature $-1$, these equations may be written as
\begin{align*}
sec(X,Y) &= -1 + \two(X,X)\two(Y,Y) - \two(X,Y)^2 \\
d^\nabla B &= 0.
\end{align*}
When $n = 2$ we recover the familiar formulas for surfaces in hyperbolic space. 
\begin{align*}
K(g) &= -1 + \det(B) \\
d^\nabla B &= 0. 
\end{align*}

The Gauss-Codazzi equations are the integrability equations for a pair of tensors to be induced by an immersion.

\begin{thm}
Let $g$ be a Riemannian metric on a simply connected manifold $M$ and let $B$ be a self adjoint endomorphism of $TM$. 
Then $(g,B)$ solve the Gauss-Codazzi equations if and only if  there exists an immersion $f: M \to \H^{n+1}$ such that $g$ is the induced metric of the immersion and $B$ is its shape operator. 
In addition, the immersion $f$ is unique up to post composition with an isometry of hyperbolic space. 
\end{thm}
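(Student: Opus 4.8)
The plan is to recast the statement as the flatness of an auxiliary bundle connection and to run the standard moving--frame proof of the fundamental theorem of hypersurfaces, carried out inside the Minkowski model $\R^{n+1,1}\supset\H^{n+1}$ supplied by Lemma \ref{HypAsSub}.

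First I would build the model bundle. Let $\mathcal E=TM\oplus\underline\R\oplus\underline\R$, a rank-$(n+2)$ bundle over $M$ with distinguished global sections $e_\nu,e_N$ spanning the two trivial summands, and equip $\mathcal E$ with the fibre metric $\langle\!\langle\cdot,\cdot\rangle\!\rangle$ restricting to $g$ on $TM$, making $TM$, $\R e_\nu$, $\R e_N$ mutually orthogonal, and with $\langle\!\langle e_\nu,e_\nu\rangle\!\rangle=1$, $\langle\!\langle e_N,e_N\rangle\!\rangle=-1$; this has signature $(n+1,1)$. Define a connection $D$ on $\mathcal E$ by the would-be Gauss--Weingarten equations
\[
D_XY=\nabla_XY+\two(X,Y)\,e_\nu+g(X,Y)\,e_N,\qquad D_Xe_\nu=-BX,\qquad D_Xe_N=X,
\]
where $BX$ and $X$ are read as sections of $TM\subset\mathcal E$. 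Using that $\nabla$ is the Levi-Civita connection of $g$ and that $B$ is $g$-self-adjoint, a direct check shows $D$ is compatible with $\langle\!\langle\cdot,\cdot\rangle\!\rangle$.

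The core of the argument is the equivalence $R^D\equiv 0\iff(g,B)\text{ solves \eqref{GC}}$. I would compute $R^D(X,Y)Z$ componentwise in $\mathcal E=TM\oplus\R e_\nu\oplus\R e_N$. Pairing the $TM$-component with a vector field $W$ yields
\[
\langle\!\langle R^D(X,Y)Z,W\rangle\!\rangle=Rm(X,Y,Z,W)+\tfrac12 g\owedge g(X,Y,Z,W)-\tfrac12\two\owedge\two(X,Y,Z,W),
\]
so its vanishing is exactly the Gauss equation. The $e_\nu$-component simplifies, via $(\nabla_X\two)(Y,Z)=X\two(Y,Z)-\two(\nabla_XY,Z)-\two(Y,\nabla_XZ)$ and torsion-freeness, to $(\nabla_X\two)(Y,Z)-(\nabla_Y\two)(X,Z)$, i.e. to $d^\nabla\two$, which vanishes iff $d^\nabla B=0$ (Section 2); and the $e_N$-component is $(\nabla_Xg)(Y,Z)-(\nabla_Yg)(X,Z)=0$ automatically. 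A parallel computation gives that $R^D(X,Y)e_\nu$ has $TM$-component $-d^\nabla B(X,Y)$ and vanishing $e_\nu$- and $e_N$-components, while $R^D(X,Y)e_N=0$ identically by torsion-freeness; since $D$ is metric this exhausts $\mathcal E$. Hence $R^D\equiv 0$ precisely when both Gauss--Codazzi equations hold. I expect this componentwise curvature computation to be the main obstacle, in the sense of being the lengthiest step; every term is forced once the bookkeeping is set up.

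It remains to pass from flatness to an immersion. Since $M$ is simply connected and $R^D=0$, $D$-parallel transport is path-independent, so a choice at a basepoint $x_0$ of a linear isometry $\mathcal E_{x_0}\to\R^{n+1,1}$ carrying $e_N(x_0)$ into $\H^{n+1}$ extends to a global bundle isometry $\Phi\colon\mathcal E\to M\times\R^{n+1,1}$ intertwining $D$ with the flat connection $d$. Put $f(x)=\Phi_x(e_N(x))$. Then $\langle f,f\rangle\equiv-1$ and connectedness keeps $f$ on one sheet, so $f\colon M\to\H^{n+1}$; parallelism of $\Phi$ gives $df_x(X)=\Phi_x(D_Xe_N)=\Phi_x(X)$, so $f$ is an isometric immersion with unit normal $\nu:=\Phi(e_\nu)$, and $\bar\nabla_X\nu=\Phi(D_Xe_\nu)=-df(BX)$ identifies the shape operator as $B$; decomposing $\bar\nabla_Xdf(Y)=\Phi(D_XY)$ recovers the Gauss formula of Lemma \ref{HypAsSub} and shows the induced connection on $M$ is $\nabla$ and the second fundamental form is $\two$. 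For uniqueness, any immersion $f_1$ realizing $(g,B)$ furnishes, through its own Gauss and Weingarten equations, a parallel bundle isometry $\Phi^1$ of the same kind; given two such $f_1,f_2$ the composite $\Phi^2\circ(\Phi^1)^{-1}$ is a $d$-parallel bundle isometry of $M\times\R^{n+1,1}$, hence a constant $A\in O(n+1,1)$, which preserves the chosen sheet and therefore restricts to an isometry of $\H^{n+1}$ with $f_2=A\circ f_1$.
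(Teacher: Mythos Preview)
Your proof is correct and is the standard moving-frames/flat-bundle argument for the fundamental theorem of hypersurfaces in a space form, specialized to the hyperboloid model. The curvature computation, the passage from flatness of $D$ to a global parallel trivialization on a simply connected base, and the uniqueness argument are all sound; the only minor omission is that you do not explicitly verify the ``only if'' direction, but that is the routine part and is already covered by the discussion immediately preceding the theorem in the paper.

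As for comparison: the paper does not prove this theorem. It is stated as a known background result and used as a black box (the theorem is the classical integrability statement for the Gauss--Codazzi system, for which the paper gives no argument and no reference). So your proposal supplies a complete proof where the paper supplies none; there is nothing further to compare.
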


If $(g,B)$ solve the Gauss-Codazzi equations then we can isometrically immerse $M$ into hyperbolic space and, via parallel flowing $M$, we get the tensors at infinity $(\hat{g},\hat{B})$.
So, given two tensors $(g,B)$ solving the Gauss-Codazzi equations we obtain another pair of tensors $(\hat{g},\hat{B})$ and these two solve their own set of equations, which, following \cite{Krasnov-Schlenker2007}, we call the Gauss-Codazzi equations at infinity.
In the general $n$-dimensional setting they are 
\begin{equation}
\label{GCInf} \tag{$\widehat{\text{GC}}$}
\left\{
\begin{aligned}
\widehat{Rm} &= -\frac{1}{2} \hat{g} \owedge \hat{\two} \\
d^{\widehat{\nabla}} \hat{B} &= 0.
\end{aligned}
\right.
\end{equation}

The next lemma shows, by taking $n=2$ and $S = 2K$, that these equations reduce to those from \cite{Krasnov-Schlenker2008} and \cite{Bridgeman-Bromberg2022} in the surface case.

\begin{lem}
\label{TraceScalarCurv}
Suppose $(\hat{g},\hat{B})$ solve the Gauss equations at infinity, then 
\[
\tr(\hat{B}) = -\frac{1}{n-1}S(\hat{g}).
\]
for $S(g)$ the scalar curvature of a metric $g$.
\end{lem}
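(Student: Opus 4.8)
The plan is to obtain the identity by tracing the Gauss equation at infinity twice with respect to $\hat{g}$. Starting from $\widehat{Rm} = -\frac{1}{2}\hat{g}\owedge\hat{\two}$, I would first apply the trace on the first and last indices, using the formula \eqref{TraceKN} for the trace of a Kulkarni--Nomizu product with the metric. This gives $Ric(\hat{g}) = -\frac{1}{2}\big((n-2)\hat{\two} + \tr_{\hat{g}}(\hat{\two})\hat{g}\big)$, and since $\tr_{\hat{g}}(\hat{\two}) = \tr(\hat{g}^{-1}\hat{\two}) = \tr(\hat{B})$ by the definitions of $\hat{\two}$ and $\hat{B}$, the right-hand side is expressed purely in terms of $\hat{\two}$, $\hat{g}$, and the scalar $\tr(\hat{B})$.

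Next I would take a further trace with respect to $\hat{g}$ to pass from the Ricci tensor to the scalar curvature. Using $\tr_{\hat{g}}(\hat{\two}) = \tr(\hat{B})$ and $\tr_{\hat{g}}(\hat{g}) = n$, contracting the displayed expression for $Ric(\hat{g})$ yields $S(\hat{g}) = -\frac{1}{2}\big((n-2)\tr(\hat{B}) + n\,\tr(\hat{B})\big) = -(n-1)\tr(\hat{B})$. Rearranging gives $\tr(\hat{B}) = -\frac{1}{n-1}S(\hat{g})$, which is the claim. Note that only the Gauss equation at infinity, and not the Codazzi equation, is needed.

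There is no genuine obstacle here: the argument is a direct two-step contraction. The only points requiring care are the bookkeeping of which pair of indices is being traced, so that \eqref{TraceKN} applies in the stated form, and the (immediate) observation that the trace of the symmetric $2$-tensor $\hat{\two}$ against $\hat{g}$ coincides with the trace of the endomorphism $\hat{B} = \hat{g}^{-1}\hat{\two}$.
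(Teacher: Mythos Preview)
Your proposal is correct and follows essentially the same approach as the paper: trace the Gauss equation at infinity once using \eqref{TraceKN} to obtain the Ricci tensor, then trace again to get the scalar curvature, and simplify. The paper's proof is identical in structure and content.
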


\begin{proof}
Take a trace of both sides of $\widehat{Rm} = -(1/2)\hat{g}\owedge\hat{\two}$ and simplify using \eqref{TraceKN}.
We get
\[
\widehat{Ric} = -\frac{1}{2}(n-2)\hat{\two} -\frac{1}{2}\tr_{\hat{g}}(\hat{\two})\hat{g}.
\]
Taking another trace yields
\[
S(\hat{g}) = -\frac{1}{2}(n-2)\tr_{\hat{g}}(\hat{\two}) - \frac{n}{2}\tr_{\hat{g}}(\hat{\two}),
\]
and simplifying gives the lemma.
\end{proof}

This dual set of equations is equivalent to the Gauss-Codazzi equations in the following sense.

\begin{thm}
\label{DualEquations}
Let the tensors $(g,B)$ and $(\hat{g},\hat{B})$ on $M$ be related by the algebraic identities \eqref{TensorsAtInfinity} and \eqref{Tensors}. 
Then, provided $\lambda =-1$ is not an eigenvalue of $B$ or $\hat{B}$, we have that $(g,B)$ solves the Gauss-Codazzi equations if and only if $(\hat{g},\hat{B})$ solves the Gauss-Codazzi equations at infinity.
\end{thm}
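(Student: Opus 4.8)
The plan is to establish the two equations of \eqref{GCInf} separately, deducing each from \eqref{GC} via the transformation rules of Lemma \ref{TensorChange}, and then to observe that the converse is the mirror image of the same argument. Throughout I put $A = Id + B$: by hypothesis $A$ is invertible, by the Codazzi equation $d^\nabla B = 0$ it is a Codazzi tensor for $g$, and by \eqref{TensorsAtInfinity} one has $\hat g(X,Y) = g(AX,AY)$. Lemma \ref{TensorChange} then hands me the two facts I will use: $\widehat\nabla_X Y = A^{-1}\nabla_X(AY)$ and $Rm(\hat g)(X,Y,Z,W) = Rm(g)(X,Y,AZ,AW)$. I would also record the elementary algebra $Id - B = 2Id - A$, which gives $\hat B = A^{-1}(Id - B) = 2A^{-1} - Id$, and $AB = BA$, which makes $A$ self-adjoint for the $2$-tensor $\two(X,Y) = g(BX,Y)$ as well as for $g$.

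\emph{Codazzi at infinity.} Since $\widehat\nabla$ is torsion free, $d^{\widehat\nabla}(Id) = 0$, so $d^{\widehat\nabla}\hat B = 2\,d^{\widehat\nabla}(A^{-1})$. Plugging $\widehat\nabla_X Z = A^{-1}\nabla_X(AZ)$ into the definition gives $\widehat\nabla_X(A^{-1}Y) = A^{-1}\nabla_X Y$, hence
\[
d^{\widehat\nabla}(A^{-1})(X,Y) = A^{-1}\nabla_X Y - A^{-1}\nabla_Y X - A^{-1}[X,Y] = A^{-1}\bigl(\nabla_X Y - \nabla_Y X - [X,Y]\bigr) = 0 .
\]
So this half is essentially automatic once Lemma \ref{TensorChange} is in hand.

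\emph{Gauss at infinity.} I substitute $Rm(g) = -\tfrac12 g\owedge g + \tfrac12\two\owedge\two$ into the curvature rule. Directly from the definition of the Kulkarni--Nomizu product, for any $2$-tensors $h,k$ one has $(h\owedge k)(X,Y,AZ,AW) = (h_A\owedge k_A)(X,Y,Z,W)$ where $h_A(X,Y) := h(X,AY)$; moreover $g_A = g + \two$ and, using $AB = BA$, $\two_A = \two + \three$. Therefore
\[
Rm(\hat g) = -\tfrac12 (g+\two)\owedge(g+\two) + \tfrac12(\two+\three)\owedge(\two+\three) .
\]
Bilinearity and symmetry of $\owedge$ let me factor the right-hand side as $-\tfrac12\bigl[(g+\two)-(\two+\three)\bigr]\owedge\bigl[(g+\two)+(\two+\three)\bigr] = -\tfrac12(g-\three)\owedge(g+2\two+\three)$, and since $\hat{\two} = g - \three$ and $\hat g = g + 2\two + \three$ this is $-\tfrac12\hat g\owedge\hat{\two}$, as required.

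For the converse I would run the identical argument with $A$ replaced by $\hat A = \tfrac12(Id + \hat B)$: the identity $(Id+\hat B)(Id + B) = 2Id$ gives $\hat A = A^{-1}$, \eqref{Tensors} gives $g(X,Y) = \hat g(\hat A X,\hat A Y)$, the Codazzi equation at infinity makes $\hat A$ a Codazzi tensor for $\hat g$, and the two computations above reproduce $d^\nabla B = 0$ and $Rm(g) = -\tfrac12 g\owedge g + \tfrac12\two\owedge\two$ (the last now via the cross term $\hat g_{\hat A}\owedge\hat{\two}_{\hat A} = (g+\two)\owedge(g-\two)$). The one genuinely delicate point — and the only place where both halves of the hypothesis are used at once — is the appeal to Lemma \ref{TensorChange}: its conclusions are available because the pullback endomorphism $A = Id + B$ is a Codazzi tensor, which is exactly what makes $A^{-1}\nabla_X(A\,\cdot\,)$ torsion free, so that even the Gauss equation at infinity rests on the Codazzi equation of \eqref{GC} (and symmetrically in the other direction). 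Everything else is bookkeeping with the invertibility hypothesis and Kulkarni--Nomizu algebra.
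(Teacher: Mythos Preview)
Your proof is correct and follows the same route as the paper: both reduce everything to Lemma~\ref{TensorChange} (the formulas for $\widehat\nabla$ and $\widehat{Rm}$ in terms of $\nabla$, $Rm$, and $A=Id+B$) and then do Kulkarni--Nomizu algebra. Your Gauss computation is organized a little more cleanly --- you pull back first via the device $h_A(X,Y)=h(X,AY)$ and then factor $(g+\two)\owedge(g+\two)-(\two+\three)\owedge(\two+\three)$ as a difference of squares, whereas the paper rewrites $g,\two$ in terms of $\hat g,\hat\two,\hat\three$, expands, and only afterward applies the curvature pullback --- but the content is identical. One point worth preserving from your write-up: your remark that Lemma~\ref{TensorChange} already requires $d^\nabla A=0$ (so that $A^{-1}\nabla_X(A\,\cdot\,)$ is torsion free) is correct and is not made explicit in the paper's statement of that lemma; in particular the paper's displayed identity $d^{\widehat\nabla}\hat B=-(Id+B)^{-1}d^\nabla B$ presupposes the Codazzi equation on one side, so the honest ``if and only if'' really comes from the symmetry you spell out.
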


\begin{proof}
That $(\hat{g},\hat{B})$ solves the Codazzi equation at infinity if and only if $(g,B)$ solves the Codazzi equation follows from the relationship between the connections of $g$ and $\hat{g}$ (Lemma \ref{ConfChange}). 
For vector fields $X$ and $Y$,
\[
\widehat{\nabla}_XY = (Id+B)^{-1}\nabla_X(Id+B)Y.
\]
A computation then shows that, because the connections are torsion-free,
\[
d^{\widehat{\nabla}}\hat{B} = -(Id+B)^{-1}d^\nabla B.
\]
So, $d^{\widehat{\nabla}}\hat{B} = 0$ if and only if $d^\nabla B = 0$, since $\lambda = -1$ is not an eigenvalue of either $B$ or $\hat{B}$.

Now suppose $(g,B)$ solves the Gauss equation. 
We will show $(\hat{g},\hat{B})$ solves the Gauss equation at infinity. 
Recall the identities $g = \frac{1}{4}(\hat{g} + 2 \hat{\two} + \hat{\three})$ and $\two = \frac{1}{4}(\hat{g} - \hat{\three})$.
After making the substitution, the Gauss equation becomes 
\[
Rm = -\frac{1}{32}(\hat{g} + 2 \hat{\two} + \hat{\three})\owedge(\hat{g} + 2 \hat{\two} + \hat{\three}) + \frac{1}{32}(\hat{g} - \hat{\three})\owedge(\hat{g} - \hat{\three}),
\]
and expanding and some cancellation reduces this to
\[
32Rm = -4\hat{g}\owedge \hat{\two} -4 \hat{g} \owedge \hat{\three} - 4 \hat{\two}\owedge\hat{\two} - 4 \hat{\two}\owedge\hat{\three}.
\] 
Simplifying a little more produces
\[
-8Rm = (\hat{g} + \hat{\two})\owedge (\hat{\two} + \hat{\three}).
\]
Since, for example, $\hat{g}(X,Y) + \hat{\two}(X,Y) = \hat{g}((Id + \hat{B})X,Y)$, we abuse notation and can write that
\[
-8Rm = \hat{g}(Id, Id + \hat{B})\owedge \hat{g}(\hat{B}, Id + \hat{B}).
\]

Now, because $\hat{g} = g(Id + B, Id + B)$, Lemma \ref{TensorChange} shows the curvature tensors are related by $\widehat{Rm}(X,Y,Z,W) = Rm(X,Y,(Id+B)Z,(Id+B)W)$, meaning we are ultimately interested in computing
\[
\hat{g}(Id, Id + \hat{B})\owedge \hat{g}(\hat{B}, Id + \hat{B})(X,Y,(Id+B)Z,(Id+B)W).
\]
By virtue of $(Id + \hat{B})(Id+B) = 2 Id$, the first term of this product is 
\begin{align*}
\hat{g}(X,(Id + \hat{B})(Id+B&)W)\hat{g}(\hat{B}Y,(Id + \hat{B})(Id+B)Z) \\
&= 4\hat{g}(X,W)\hat{g}(\hat{B}Y,Z) \\
&= 4 \hat{g}(X,W)\hat{\two}(Y,Z).
\end{align*}
The other terms simplify similarly to give
\begin{align*}
\hat{g}(Id, Id + \hat{B})\owedge &\hat{g}(\hat{B}, Id + \hat{B})(X,Y,(Id+B)Z,(Id+B)W) \\
&= 4 \hat{g}(X,W)\hat{\two}(Y,Z) + 4 \hat{g}(Y,Z)\hat{\two}(X,W) \\
&\phantom{=} -4 \hat{g}(X,Z)\hat{\two}(Y,W) - 4 \hat{g}(Y,W)\hat{\two}(X,Z),
\end{align*}
which is $4\hat{g}\owedge\hat{\two}(X,Y,Z,W)$.
All together, we have
\[
-2Rm(X,Y,(Id+B)Z,(Id+B)W) = \hat{g}\owedge\hat{\two}(X,Y,Z,W),
\]
and this is equivalent to the Gauss equation at infinity for $(\hat{g},\hat{B})$.

Now suppose $(\hat{g},\hat{B})$ solves the Gauss equation at infinity, we will show $(g,B)$ solves the Gauss equation.
Again we abuse notation to write $\hat{g} = g(Id + B, Id + B)$ and $\hat{\two} = g(Id - B, Id + B)$, in which case the Gauss equation at infinity can be written as
\[
-2\widehat{Rm} = g(Id+B,Id+B)\owedge g(Id-B,Id+B).
\]
For similar reasons as above, the curvature tensors of $g$ and $\hat{g}$ are related by $Rm(X,Y,Z,W) = \widehat{Rm}(X,Y,\frac{1}{2}(Id+\hat{B})Z,\frac{1}{2}(Id+\hat{B})W)$.
Therefore, we want to compute
\[
g(Id+B,Id+B)\owedge g(Id-B,Id+B)(X,Y,\frac{1}{2}(Id+\hat{B})Z,\frac{1}{2}(Id+\hat{B})W).
\]
Because $(Id+\hat{B})(Id+B) = 2Id$, the first term in the product is 
\begin{align*}
g((Id+B)X,(Id + B)\frac{1}{2}(Id+\hat{B})W)&g((Id-B)Y,(Id+B)\frac{1}{2}(Id+\hat{B})Z) \\
&= g((Id+B)X,W)g((Id-B)Y,Z).
\end{align*}
The other terms in the product simplify similarly and we get that $\hat{g}\owedge\hat{\two}$ applied to $(X,Y,\frac{1}{2}(Id+\hat{B})Z,\frac{1}{2}(Id+\hat{B})W)$ reduces to 
\begin{align*}
g((&Id+B)X,W)g((Id-B)Y,Z) + g((Id+B)Y,Z)g((Id-B)X,W) \\
&-g((Id+B)X,Z)g((Id-B)Y,W) - g((Id+B)Y,W)g((Id-B)X,Z),
\end{align*}
and this is equal to $g(Id+B,Id)\owedge g(Id-B,Id)(X,Y,Z,W)$.
Consequently, the curvature tensor of $g$ is
\begin{align*}
Rm &= -\frac{1}{2}g(Id+B,Id)\owedge g(Id-B,Id) \\
&= -\frac{1}{2}(g+\two)\owedge(g-\two) \\
&= -\frac{1}{2}g \owedge g + \frac{1}{2}\two \owedge \two,
\end{align*}
which is the Gauss equation for $(g,B)$.

\end{proof}

Given a pair $(\hat{g},\hat{B})$ that solve the Gauss-Codazzi equations at infinity, transforming to $(g,B)$ gives a pair that solve the regular Gauss-Codazzi equations and guarantees an isometric immersion $f: (M,g) \to \H^{n+1}$ into hyperbolic space. 
Epstein in \cite{Epstein1984} gives a construction of this map $f$ as the envelope of a family of horospheres tangent to the image of $f_\infty$ in $S^n$.
See also \cite{Anderson1998} for a detailed discussion for the surface case.

\subsection{Conformally Flat and M\"obius Structures}

In the 2 dimensional setting, the work of Bridgeman-Bromberg \cite{Bridgeman-Bromberg2022} and Schlenker \cite{Schlenker2017} deal heavily with Riemann surfaces equipped with complex projective structures.
Such a structure is an atlas of charts to $\CP^1 \simeq S^2$ with transition functions the restrictions of M\"obius transformations in $\mathrm{PSL}_2\C$.
A Riemann surface by contrast is a surface with an atlas of charts to $\C$ whose transition functions are holomorphic. 
As M\"obius transformations are holomorphic, a complex projective structure induces a complex structure on a surface.
But in general the concepts are distinct. 
In fact, the set of complex projective structures on a surface (up to equivalence) fibers over the Teichm\"uller space of the surface, with fiber the vector space of holomorphic quadratic differentials.
See \cite{Dumas2009} for a survey of these topics. 

For dimension $n \geq 3$, a complex projective structure generalizes to that of a M\"obius structure on a manifold.
A M\"obius transformation in arbitrary dimension $n$ is a diffeomorphism of $S^n$ which is the composition of inversions in spheres. 
A M\"obius structure on $M$ then is an atlas of charts to $S^n$ whose transition functions are (the restrictions of) M\"obius transformations.
This is a geometric structure with topological space $S^n$ and group $\text{M\"ob}(S^n)$. 
As such, any M\"obius structure on a manifold $M$ can be given via a developing map $f: \tilde{M} \to S^n$ on its universal cover and a holonomy representation $\rho: \pi_1(M) \to \text{M\"ob}(S^n)$ satisfying $f(\gamma \cdot x) = \rho(\gamma)f(x)$ for all $x \in \tilde{M}$ and $\gamma \in \pi_1(M)$.

A complex structure on a surface generalizes to a locally conformally flat structure.
Indeed, a locally conformally flat structure on a manifold $M$ is an atlas of charts to $\R^n$ whose transition functions are conformal maps of the Euclidean metric.
There is an equivalent definition in terms of Riemannian metrics.
We say a metric $g$ is locally conformally flat if each point in $M$ has a chart to $\R^n$ on which $g$ is conformal to the pullback of the Euclidean metric.
A locally conformally flat structure on $M$ is also the conformal class $[g]$ of a locally conformally flat metric $g$.
See \cite{Matsumoto1992} for this equivalence as well as survey of these topics.

In contrast to the surface case, these two notions are one and the same.
This is because conformal transformations in dimensions $n \geq 3$ are incredibly rigid in the following sense (see \cite{Matsumoto1992} for a modern proof).

\begin{thm}[Liouville]
Suppose $U$ is a domain in $\R^n$ for $n \geq 3$, and $\varphi: U \to \R^n$ is a conformal map for the Euclidean metric. Then $\varphi$ is a M\"obius transformation.
\end{thm}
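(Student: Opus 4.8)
The plan is to derive Liouville's theorem from the conformal transformation formula for curvature already recorded in Lemma \ref{ConfChange}, so that the statement collapses to a short ODE argument. Since $\varphi$ is conformal it is a local diffeomorphism onto an open subset of Euclidean space, hence the pulled-back metric $\tilde{g} := \varphi^{*}g_{0}$, where $g_{0}$ is the Euclidean metric, is flat: $Rm(\tilde{g}) = 0$. Writing $\tilde{g} = e^{2u}g_{0}$ and applying Lemma \ref{ConfChange} with $Rm(g_{0}) = 0$ gives
\[
\tilde{g}\owedge\Big(\mathrm{Hess}_{g_{0}}(u) - du^{2} + \tfrac{1}{2}|\nabla u|^{2}g_{0}\Big) = 0 .
\]
I would then note that, for $n\geq 3$, the map $T\mapsto \tilde{g}\owedge T$ is injective on symmetric $2$-tensors: applying $\tr_{\tilde{g}}$ in \eqref{TraceKN} forces $(n-2)T + \tr_{\tilde{g}}(T)\,\tilde{g} = 0$, and a second trace forces $\tr_{\tilde{g}}(T) = 0$, hence $T = 0$. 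Therefore $\mathrm{Hess}_{g_{0}}(u) - du^{2} + \tfrac12|\nabla u|^{2}g_{0} = 0$.

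Next I would substitute $v = e^{-u}$. A direct computation converts the previous identity into $\mathrm{Hess}_{g_{0}}(v) = \mu\, g_{0}$ with $\mu = |\nabla v|^{2}/(2v)$; that is, all mixed second partials of $v$ vanish and all pure second partials are equal. Differentiating $\partial_{i}\partial_{j}v = \mu\,\delta_{ij}$ once more and using symmetry of third partials forces $\mu$ to be \emph{constant} on the (connected) domain $U$, so $v$ is a quadratic polynomial $v(x) = \tfrac{\mu}{2}|x|^{2} + \langle b,x\rangle + c$. Feeding this back into the relation $\mu = |\nabla v|^{2}/(2v)$ pins it down: either $\mu = 0$, forcing $v$ to be a positive constant and hence the conformal factor $\lambda = 1/v$ constant; or $\mu\neq 0$, in which case the constant term of $v$ must vanish, giving $v = \tfrac{\mu}{2}|x-x_{0}|^{2}$ for some $x_{0}\notin U$ and $\lambda = \mathrm{const}/|x-x_{0}|^{2}$.

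Finally I would translate this information about $\lambda$ into the M\"obius conclusion. In the constant-factor case, differentiating the conformality relation $\sum_{k}\partial_{i}\varphi^{k}\,\partial_{j}\varphi^{k} = \lambda^{2}\delta_{ij}$ and running the usual cyclic-permutation argument (as for rigidity of isometries) shows all second partials of $\varphi$ vanish, so $\varphi$ is affine with $D\varphi$ a constant scalar times an orthogonal matrix, i.e. a similarity, which is M\"obius. In the case $\lambda = \mathrm{const}/|x-x_{0}|^{2}$, I would precompose with the inversion $\iota(x) = x_{0} + (x-x_{0})/|x-x_{0}|^{2}$; since $\iota^{*}g_{0} = |x-x_{0}|^{-4}g_{0}$ and $(\lambda\circ\iota)(y) = \mathrm{const}\cdot|y-x_{0}|^{2}$, the composite $\varphi\circ\iota$ is conformal with \emph{constant} factor, hence a similarity by the previous case, and then $\varphi = (\varphi\circ\iota)\circ\iota$ is a composition of M\"obius transformations.

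The point requiring care, rather than routine calculation, is this last assembly: recognizing that the two normal forms for $\lambda$ produced by the ODE analysis are exactly the conformal factors of a similarity and of an inversion, and that precomposing with the inversion genuinely reduces the second case to the first. One should also record the standard regularity input that a $C^{1}$ conformal map is automatically smooth, which is what licenses differentiating the conformality relation; alternatively one simply assumes $\varphi$ smooth, which is all the paper needs.
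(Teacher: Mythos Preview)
The paper does not prove Liouville's theorem: it is stated without proof and referred to \cite{Matsumoto1992}. So there is no in-paper argument to compare against; your proposal supplies a proof where the paper has none.

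Your argument is sound and fits the paper's toolkit well, since it leans only on Lemma~\ref{ConfChange} and the trace identity~\eqref{TraceKN}. Two small points of polish. First, in the case $\mu\neq 0$ the relation you obtain from $2\mu v = |\nabla v|^{2}$ is $2\mu c = |b|^{2}$, not ``the constant term of $v$ must vanish''; completing the square then gives $v = \tfrac{\mu}{2}\,|x - x_{0}|^{2}$ with $x_{0} = -b/\mu$, which is what you want (and positivity of $v = e^{-u}$ forces $\mu>0$ and $x_{0}\notin U$). Second, the injectivity step is cleaner if you note that $\tilde{g}\owedge T = e^{2u}\,g_{0}\owedge T$, so you may as well trace with respect to $g_{0}$ directly when invoking~\eqref{TraceKN}. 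The reduction of the inversion case to the similarity case via $\varphi\circ\iota$ is correct as written.
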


\begin{cor}
Any (locally) conformally flat structure is a M\"obius structure. 
\end{cor}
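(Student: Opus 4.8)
The plan is to unwind the definitions and feed the transition functions into Liouville's theorem. I would work with the atlas description (passing from a locally conformally flat metric to its conformal atlas via the equivalence cited above, if the structure is given to us as a conformal class): a locally conformally flat structure on $M$ is a maximal atlas $\{(U_\alpha,\psi_\alpha)\}$ with $\psi_\alpha:U_\alpha\to\R^n$ such that each transition $\psi_{\alpha\beta}:=\psi_\beta\circ\psi_\alpha^{-1}$ is a conformal map between domains of $\R^n$ for the Euclidean metric. Fix once and for all a stereographic projection, i.e. a conformal diffeomorphism $\sigma$ from $\R^n$ onto $S^n$ minus a point, and define new charts $\varphi_\alpha:=\sigma\circ\psi_\alpha:U_\alpha\to S^n$.

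First I would compute the transitions of this new atlas: on the relevant overlaps they are $\varphi_{\alpha\beta}=\sigma\circ\psi_{\alpha\beta}\circ\sigma^{-1}$. Since $n\geq 3$ and $\psi_{\alpha\beta}$ is conformal on a domain of $\R^n$, Liouville's theorem identifies $\psi_{\alpha\beta}$ with the restriction of a M\"obius transformation of $\widehat{\R^n}=\R^n\cup\{\infty\}$; conjugating by $\sigma$ — which carries the M\"obius group of $\widehat{\R^n}$ isomorphically onto $\text{M\"ob}(S^n)$ — shows $\varphi_{\alpha\beta}$ is the restriction of an element of $\text{M\"ob}(S^n)$. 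Hence $\{(U_\alpha,\varphi_\alpha)\}$ is a M\"obius atlas on $M$, and taking its maximal extension produces a M\"obius structure.

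To finish I would check that this yields a well-defined structure and not merely one atlas among many: a different choice of stereographic projection $\sigma'$ replaces each $\varphi_\alpha$ by $(\sigma'\circ\sigma^{-1})\circ\varphi_\alpha$ with $\sigma'\circ\sigma^{-1}\in\text{M\"ob}(S^n)$, so the resulting maximal M\"obius atlas is unchanged, and enlarging the conformally flat atlas only enlarges the M\"obius atlas compatibly. The one step carrying genuine content is the appeal to Liouville's theorem, which is already available; everything else is the bookkeeping of transporting $\R^n$-valued charts to $S^n$-valued charts, so I expect no real obstacle.
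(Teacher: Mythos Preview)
Your proposal is correct and follows the same approach as the paper: both identify Liouville's theorem as the only substantive step, observing that the transition functions of a conformally flat atlas are conformal maps and therefore M\"obius. The paper's proof is a single sentence and suppresses the bookkeeping you spell out (transporting $\R^n$-valued charts to $S^n$-valued charts via a stereographic projection and checking independence of that choice); your added detail is harmless and arguably makes the argument cleaner given that the paper defines M\"obius structures with $S^n$-valued charts.
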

\begin{proof}
The transition functions of a conformally flat structure are conformal maps and hence M\"obius transformations.
\end{proof}

Given two conformal metrics $g_2 = e^{2u}g_1$, Osgood and Stowe define in \cite{Osgood-Stowe1992} the symmetric 2-tensor $\mathrm{OS}(g_2,g_1)$ as the traceless part of $\mathrm{Hess}(u) - du^2$.
That is, 
\[
\mathrm{OS}(g_2,g_1) = \mathrm{Hess}(u)  - du\otimes du - \frac{1}{n}\left( \Delta u - |\nabla u|^2\right)g_1,
\]
with all relevant objects defined with respect to $g_1$.
We will refer to this as the Osgood-Stowe differential of $g_2$ with respect to $g_1$, or sometimes simply as the Osgood-Stowe tensor.
They prove that $\mathrm{OS}$ has a cocycle property for three conformal metrics
\[
\mathrm{OS}(g_3,g_1) = \mathrm{OS}(g_3,g_2) + \mathrm{OS}(g_2,g_1).
\]
There is also a naturality property that if $f: M \to M$ is a smooth map then for two conformal metrics
\[
f^*\mathrm{OS}(g_2,g_1) = \mathrm{OS}(f^*g_2,f^*g_1).
\]
It also is sensitive to M\"obius transformations in the following sense. 
If $f:U \to \R^n$ is (the restriction of) a M\"obius transformation on a domain in $\R^n$ then for $\bar{g}$ the Euclidean metric, we have $\mathrm{OS}(f^*\bar{g},\bar{g}) = 0$. 

These facts have the following consequence. 
If $g$ is a locally conformally flat metric on $M$ and $\varphi: U \to \R^n$ is a chart for which $g = e^{2u}\varphi^*\bar{g}$, then $\mathrm{OS}(g, \varphi^*\bar{g})$ patches together to form a global tensor on $M$. 
To see this, if $\psi: V \to \R^n$ is any other conformal chart overlapping with $\varphi$ then 
\begin{align*}
\mathrm{OS}(g ,\psi^*\bar{g}) 
&= \mathrm{OS}(g, \varphi^*\bar{g}) + \mathrm{OS}(\varphi^*\bar{g},\psi^*\bar{g}) \\
&= \mathrm{OS}(g, \varphi^*\bar{g}) + \varphi^*\mathrm{OS}(\bar{g},(\psi \circ \varphi^{-1})^*\bar{g})
\end{align*}
and $\mathrm{OS}(\bar{g},(\psi \circ \varphi^{-1})^*\bar{g}) = 0$ since $\psi \circ \varphi^{-1}$ is a conformal map and hence a M\"obius transformation.
For a conformal metric $g$ on $M$ we will refer to this globally defined object as $\mathrm{OS}(g)$.

\section{Results}

\begin{lem}
\label{scalings}
If $(\hat{g},\hat{B})$ are a pair of tensors that solve the Gauss-Codazzi equations at infinity, then so too do $(e^{2t}\hat{g},e^{-2t}\hat{B})$ for any $t$. 
\end{lem}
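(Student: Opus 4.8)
The plan is to verify directly that the pair $(e^{2t}\hat{g}, e^{-2t}\hat{B})$ satisfies both equations in $(\widehat{\text{GC}})$, using the transformation lemmas from the preliminaries. Write $\check{g} = e^{2t}\hat{g}$ and $\check{B} = e^{-2t}\hat{B}$, noting $t$ is constant so $u = t$ in Lemma \ref{ConfChange} and Lemma \ref{dRel}. First I would compute $\check{\two}$, the $2$-tensor $\check{g}\check{B}(X,Y) = \check{g}(\check{B}X,Y)$: since $\check{g}\check{B} = e^{2t}\hat{g} \cdot e^{-2t}\hat{B} = \hat{g}\hat{B} = \hat{\two}$, we get $\check{\two} = \hat{\two}$ unchanged. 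Then $\check{g} \owedge \check{\two} = e^{2t}(\hat{g}\owedge\hat{\two})$ by bilinearity of the Kulkarni-Nomizu product.

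For the Gauss equation, since $u = t$ is constant, Lemma \ref{ConfChange} gives $\widecheck{\nabla} = \widehat{\nabla}$ (the correction terms $du$, $\nabla u$, $\mathrm{Hess}_g(u)$ all vanish), hence $Rm(\check{g}) = e^{2t}Rm(\hat{g})$ directly from that lemma. So the Gauss equation at infinity for $(\check{g},\check{B})$ reads $e^{2t}Rm(\hat{g}) = -\tfrac{1}{2}e^{2t}(\hat{g}\owedge\hat{\two})$, which is exactly $e^{2t}$ times the Gauss equation for $(\hat{g},\hat{B})$ and therefore holds. For the Codazzi equation, again because $u = t$ is constant, Lemma \ref{dRel} gives $d^{\widecheck{\nabla}}T = d^{\widehat{\nabla}}T$ for any symmetric $2$-tensor (the extra term involves $\nabla u = 0$). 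Applying this to $T = \check{\two} = \hat{\two}$, and using that $d^{\widehat{\nabla}}\hat{B} = 0$ is equivalent to $d^{\widehat{\nabla}}\hat{\two} = 0$ (the relation $d^\nabla T(X,Y,Z) = g(X, d^\nabla(g^{-1}T)(Y,Z))$ from the preliminaries), we conclude $d^{\widecheck{\nabla}}\check{B} = 0$.

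There is essentially no obstacle here: the content is just tracking that a constant conformal factor leaves connections and exterior derivatives unchanged while scaling $Rm$ and the Kulkarni-Nomizu product by the same factor, and that the "shape operator" $\hat{B}$ transforms by the reciprocal factor precisely so that $\hat{\two}$ is preserved. The only point requiring a small care is confirming $\check{B}$ is self-adjoint with respect to $\check{g}$ — but $\check{g}(\check{B}X,Y) = \hat{\two}(X,Y)$ is symmetric, so this is immediate. An alternative, even shorter route: the substitution $t \mapsto t + s$ in the expansion of $g_t$ shows that replacing $\hat{g}$ by $e^{2s}\hat{g}$ corresponds to reparametrizing the parallel flow, which manifestly preserves solvability of $(\widehat{\text{GC}})$; but I would present the direct computation above since it is self-contained and does not invoke the immersion.
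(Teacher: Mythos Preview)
Your proposal is correct and follows essentially the same approach as the paper: both observe that $\check{\two} = \hat{\two}$, that the Kulkarni--Nomizu product and the curvature tensor scale by the same factor $e^{2t}$, and conclude the Gauss equation at infinity for the rescaled pair. Your write-up is in fact slightly more complete than the paper's, which does not explicitly address the Codazzi equation (it is implicit in the invariance of the Levi-Civita connection under constant scaling, exactly as you argue).
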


\begin{proof}
Call $\hat{g}_t = e^{2t}\hat{g}$ and $\hat{B}_t = e^{-2t}\hat{B}$.
Then the second fundamental form $\hat{\two}_t$ is constant in $t$ since $\hat{\two}_t(X,Y) = e^{2t}\hat{g}(e^{-2t}\hat{B}X,Y) = \hat{\two}(X,Y)$.
Consequently, 
\[
-\frac{1}{2}\hat{g}_t \owedge \hat{\two}_t 
= -\frac{1}{2}e^{2t}\hat{g}\owedge \hat{\two} 
= e^{2t}Rm(\hat{g}).
\]
The lemma is then proven by noting that if $k$ is a positive constant and $g$ a Riemannian metric, then $Rm(kg) = kRm(g)$.
\end{proof}

If $(\hat{g},\hat{B})$ solves the Gauss-Codazzi equations at infinity, then transforming to $(g,B)$ provides an immersion $f: M \to \H^{n+1}$ into hyperbolic space.
If we do the same for $(e^{2t}\hat{g},e^{-2t}\hat{B})$ we obtain a hypersurface in $f_t: M \to \H^{n+1}$ which may be obtained from $f$ by flowing in the normal direction for time $t$.
That is $f_t  = \mathcal{G}^t \circ F$ for $F$ the unit normal lift of $f$ and for $\mathcal{G}^t$ the geodesic flow. 
In this way, a pair $(\hat{g},\hat{B})$ solving \ref{GCInf} produces not just an immersion into hyperbolic space, but a whole family of immersions.

\begin{prop}
\label{GCInf-LCF}
If $(\hat{g},\hat{B})$ are a pair of tensors that solve the Gauss-Codazzi equations at infinity, then $\hat{g}$ is locally conformally flat. 
\end{prop}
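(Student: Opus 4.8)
The plan is to exploit the family of parallel immersions produced by the scaling symmetry of Lemma~\ref{scalings}. Starting from $(\hat{g},\hat{B})$ solving \eqref{GCInf}, transform to $(g,B)$ via \eqref{Tensors}; this is a genuine metric and self-adjoint endomorphism (using that $\lambda=-1$ is not an eigenvalue of $\hat B$), and by Theorem~\ref{DualEquations} the pair $(g,B)$ solves \eqref{GC}. On a simply connected cover we obtain an isometric immersion $f:M\to\H^{n+1}$ with induced metric $g$ and shape operator $B$, together with its unit normal lift $F$. Flowing by the geodesic flow gives the parallel family $f_t=\mathcal G^t\circ F$ with induced metrics $g_t=A_t^*g$, $A_t=\cosh(t)\,Id+\sinh(t)B$, and in the limit $t\to\infty$ the hyperbolic Gauss map yields $f_\infty=\mathcal G\circ F:M\to S^n=\partial_\infty\H^{n+1}$.

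The key point is Lemma~\ref{HypGaussMap}: the pullback of the round metric on $S^n$ under $f_\infty$ is conformal to $\hat g$, namely $f_\infty^*\overset{\circ}{g}=\langle f-N,e_{n+2}\rangle^{-2}\,\hat g$. Since the round metric $\overset{\circ}{g}$ on $S^n$ is itself locally conformally flat (stereographic projection gives charts to $\R^n$ on which $\overset{\circ}{g}$ is conformal to the Euclidean metric), the pullback $f_\infty^*\overset{\circ}{g}$ is locally conformally flat wherever $f_\infty$ is a local diffeomorphism — and by Lemma~\ref{HypGaussMap}, $f_\infty$ is an immersion precisely because $\lambda=-1$ is not an eigenvalue of $B$, hence a local diffeomorphism between equidimensional manifolds. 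Composing the conformal charts of $S^n$ with $f_\infty$ gives an atlas on $M$ in which $f_\infty^*\overset{\circ}{g}$, and therefore the conformally equivalent metric $\hat g$, is conformal to the Euclidean metric. Thus $\hat g$ is locally conformally flat, and this is a local statement so simple connectivity of $M$ is not actually needed in the conclusion (one passes to the cover only to produce $f$).

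The main obstacle — really the only delicate point — is bookkeeping the eigenvalue hypotheses so that every step is legitimate: one needs $\lambda=-1$ not an eigenvalue of $\hat B$ to form $g$ from \eqref{Tensors}, and then one must check this forces $\lambda=-1$ not an eigenvalue of $B$, which is exactly what makes $f_\infty$ an immersion. This follows from the algebraic identity $(Id+\hat B)(Id+B)=2\,Id$ recorded after \eqref{Tensors}: if $(Id+B)v=0$ then $0=(Id+\hat B)(Id+B)v=2v$, so $v=0$. Everything else is a direct appeal to the lemmas already proved, so the write-up is short.
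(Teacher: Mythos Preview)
Your argument is essentially the paper's proof for the case in which $\lambda=-1$ is not an eigenvalue of $\hat B$, and that part is fine. The gap is that you simply \emph{assume} this eigenvalue condition (``using that $\lambda=-1$ is not an eigenvalue of $\hat B$'') without ever justifying it, yet it is not a hypothesis of the proposition. Nothing in \eqref{GCInf} prevents $\hat B$ from having $-1$ as an eigenvalue at some point, and at such a point the transformation \eqref{Tensors} is undefined, so the rest of the construction (the immersion $f$, the map $f_\infty$, Lemma~\ref{HypGaussMap}) never gets off the ground. Your final paragraph handles only the implication ``$-1$ not an eigenvalue of $\hat B$ $\Rightarrow$ $-1$ not an eigenvalue of $B$'', which is the easy direction; the real issue is what to do when $-1$ \emph{is} an eigenvalue of $\hat B$.

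This is precisely what Lemma~\ref{scalings} is for, and although you mention it in your opening sentence you never actually use it. The fix (and the paper's argument) is: given $p\in M$, choose a neighborhood $U$ with compact closure, so that the eigenvalues of $\hat B$ are bounded below by some $c>-\infty$ on $\bar U$; pick $t$ large enough that $e^{-2t}c>-1$. Then $(e^{2t}\hat g,\,e^{-2t}\hat B)$ still solves \eqref{GCInf} by Lemma~\ref{scalings}, and now $-1$ is not an eigenvalue of $e^{-2t}\hat B$ on $\bar U$. Run your argument for this rescaled pair to conclude $e^{2t}\hat g$ is locally conformally flat on $U$; since $\hat g$ is conformal to $e^{2t}\hat g$, the same holds for $\hat g$.
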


\begin{proof}
First suppose $\lambda = -1$ is not an eigenvalue of $\hat{B}$.
Let $p$ be a point in $M$ and take a simply connected neighborhood $U$ around $p$.
On $U$, define $(g,B)$ via \eqref{Tensors}.
Since $-1$ is not an eigenvalue of $\hat{B}$, the tensor $g$ is a Riemannian metric and since $(\hat{g},\hat{B})$ solve \ref{GCInf}, the pair $(g,B)$ will solve \ref{GC}.
We are then gifted an isometric immersion $f: U \to \H^{n+1}$ and composing this with the hyperbolic Gauss map produces $f_\infty:U \to S^n$, from which Lemma \ref{HypGaussMap} says $\hat{g}$ is conformal to $f_\infty^*\overset{\circ}{g}$.
Since the round metric $\overset{\circ}{g}$ on the sphere is locally conformally flat, by shrinking $U$ if necessary, $f_\infty^*\overset{\circ}{g}$ will be conformal to a flat metric, implying $\left.\hat{g}\right|_{U}$ will be as well. 

In the case that an eigenvalue of $\hat{B}$ is $-1$ at a point $p$, take a simply connected neighborhood $U$ of $p$ with compact closure.
There is a constant $c$ with $-\infty < c < -1$ such that, on $\bar{U}$, the eigenvalues of $\hat{B}$ are uniformly bounded below by $c$. 
Choose $t$ large enough so that $e^{-2t}c > -1$. 
From Lemma \ref{scalings}, for this same $t$, the pair $(e^{2t}\hat{g},e^{-2t}\hat{B})$ also solve \ref{GCInf}. 
Moreover, on $\bar{U}$, none of the eigenvalues of $e^{-2t}\hat{B}$ are $-1$. 
This is because if $\lambda$ is an eigenvalue of $e^{-2t}\hat{B}$ then $e^{2t}\lambda$ is an eigenvalue of $\hat{B}$ , meaning $e^{2t}\lambda > c$.
Hence $\lambda > e^{-2t}c > -1$.
Consequently, from the preceding reasoning, $e^{2t}\hat{g}$ is locally conformally flat on $U$.
Shrink $U$ if necessary so that $e^{2t}\hat{g}$ is conformal to a flat metric on $U$. 
Then $\hat{g}$ is conformal to a flat metric on $U$ as well since $\hat{g}$ is conformal to $e^{2t}\hat{g}$.
\end{proof}

When $\hat{g}$ is locally conformally flat, the Gauss-Codazzi equations at infinity are a system of equations for the tensor $\hat{B}$.
We are fully able to identify the solutions.
We note that Theorems \ref{MainThmGauss} and \ref{MainThmCodazzi} and their proofs are still valid when $n = 2$ once a complex projective structure has been chosen to create the tensor $\mathrm{OS}(\hat{g})$.

\begin{thm}
\label{MainThmGauss}
Let $\hat{g}$ be a locally conformally flat metric on $M$. Then for 
\[
\hat{g} \hat{B} = 2\mathrm{OS}(\hat{g}) - \frac{S(\hat{g})}{n(n-1)}\hat{g},
\]
$(\hat{g},\hat{B})$ solves the Gauss equation at infinity.
\end{thm}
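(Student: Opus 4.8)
The plan is to verify directly that the candidate tensor $\hat{B}$ defined by $\hat{g}\hat{B} = 2\mathrm{OS}(\hat{g}) - \frac{S(\hat{g})}{n(n-1)}\hat{g}$ makes $\widehat{Rm} = -\tfrac12\hat{g}\owedge\hat{\two}$ hold, where $\hat{\two} = \hat{g}\hat{B}$. Since $\hat{g}$ is locally conformally flat, the computation is local: fix a conformal chart in which $\hat{g} = e^{2u}\bar{g}$ for $\bar{g}$ the flat Euclidean metric, and express everything in terms of $u$ and $\bar{g}$-quantities. By definition $\mathrm{OS}(\hat{g}) = \mathrm{Hess}_{\bar g}(u) - du\otimes du - \tfrac1n(\bar\Delta u - |\bar\nabla u|^2)\bar g$, and the scalar curvature term can be rewritten using the conformal transformation law from Lemma \ref{ConfChange}, namely $S(\hat g) = e^{-2u}\bigl(-2(n-1)\bar\Delta u - (n-2)(n-1)|\bar\nabla u|^2\bigr)$ since $S(\bar g)=0$. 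The key simplification I expect is that $\mathrm{OS}(\hat{g})$ is built from exactly the same tensor $\mathrm{Hess}_{\bar g}(u) - du^2$ that appears in the conformal curvature formula; this is why the flat Euclidean chart is the natural place to compute.

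First I would write out $Rm(\hat g)$ via Lemma \ref{ConfChange}: with $\bar g$ flat, $Rm(\hat g) = -\,\hat g \owedge\bigl(\mathrm{Hess}_{\bar g}(u) - du^2 + \tfrac12|\bar\nabla u|^2\bar g\bigr)$. Next I would substitute $\hat g = e^{2u}\bar g$ inside the Kulkarni–Nomizu factor to homogenize, noting $\hat g\owedge(e^{2u}T) = e^{2u}(\hat g\owedge T)$ is not quite right — rather $\hat g \owedge T$ with $T$ a $\bar g$-tensor is what it is; better to keep $\hat g$ in the first slot throughout. So set $P := \mathrm{Hess}_{\bar g}(u) - du^2 + \tfrac12|\bar\nabla u|^2\bar g$, giving $Rm(\hat g) = -\hat g\owedge P$. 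On the other side, I want $-\tfrac12\hat g\owedge\hat\two$ to equal $-\hat g\owedge P$, i.e. I need $\hat\two = 2P$ up to the ambiguity in $\owedge$ with $\hat g$. But $\hat g\owedge(\cdot)$ has a kernel — it annihilates pure-trace tensors proportionally? No: $\hat g\owedge(f\hat g)$ is nonzero. The genuine point is that only the traceless part of $P$ is pinned down by matching the full curvature, because $\hat g\owedge\hat g$ absorbs trace adjustments; so I compare traceless parts directly and then fix the trace coefficient separately.

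Concretely: the traceless part of $P$ (with respect to $\bar g$, equivalently $\hat g$) is exactly $\mathrm{OS}(\hat g)$, since the $\tfrac12|\bar\nabla u|^2\bar g$ term and the trace of $\mathrm{Hess}_{\bar g}(u) - du^2$ are pure trace. Thus $P = \mathrm{OS}(\hat g) + \tfrac1n\,\tr_{\bar g}(P)\,\bar g$, and I compute $\tr_{\bar g}(P) = \bar\Delta u - |\bar\nabla u|^2 + \tfrac n2 |\bar\nabla u|^2 = \bar\Delta u + \tfrac{n-2}{2}|\bar\nabla u|^2$. Comparing $S(\hat g)$, one finds $\tfrac1n\tr_{\bar g}(P)\,\bar g \cdot e^{-2u}\cdot(\text{const})$ matches $-\tfrac{S(\hat g)}{2n(n-1)}\hat g$ — this is the arithmetic check that the coefficient $\frac{S(\hat g)}{n(n-1)}$ is the correct one (and is consistent with Lemma \ref{TraceScalarCurv}, giving a sanity check via $\tr(\hat B) = -\tfrac1{n-1}S(\hat g)$). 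Then $\hat\two = \hat g\hat B = 2\mathrm{OS}(\hat g) - \tfrac{S(\hat g)}{n(n-1)}\hat g = 2P$ (after converting $\bar g$ to $\hat g = e^{2u}\bar g$ in the trace term), so $-\tfrac12\hat g\owedge\hat\two = -\hat g\owedge P = Rm(\hat g)$, as desired. Finally I would remark that this is independent of the chart, since $\mathrm{OS}(\hat g)$, $S(\hat g)$, and $Rm(\hat g)$ are all globally defined, so the local verification suffices.

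The main obstacle I anticipate is purely bookkeeping: keeping straight which metric ($\bar g$ versus $\hat g$) each trace, gradient, and Hessian is taken with respect to, and correctly tracking the $e^{2u}$ factors when passing between $\mathrm{OS}(\hat g)$ (defined relative to $\bar g$) and the curvature identity (naturally expressed with $\hat g$ in the Kulkarni–Nomizu slots). There is no conceptual difficulty once the flat chart is fixed; the content is entirely in the algebraic identity that the traceless part of $\mathrm{Hess}_{\bar g}(u) - du^2 + \tfrac12|\bar\nabla u|^2\bar g$ is the Osgood–Stowe tensor and that the trace term conspires with Lemma \ref{ConfChange} to produce exactly the scalar-curvature coefficient stated.
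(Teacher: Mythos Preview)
Your proposal is correct and follows essentially the same route as the paper: work in a flat conformal chart, use Lemma \ref{ConfChange} to write $\widehat{Rm} = -\hat g\owedge P$ with $P = \mathrm{Hess}_{\bar g}(u) - du^2 + \tfrac12|\bar\nabla u|^2\bar g$, and then check the algebraic identity $\hat\two = 2P$. The paper expands $\hat\two$ and simplifies it down to $2P$ directly, whereas you decompose $P$ into its traceless part (which is $\mathrm{OS}(\hat g)$) plus its trace part (which you match to the scalar-curvature term); this is the same computation read in the opposite direction, and your brief detour about a possible kernel of $\hat g\owedge(\cdot)$ is unnecessary since you end up computing both pieces anyway.
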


\begin{proof}
This is a quick consequence of how the curvatures behave under a conformal change.
Locally write $\hat{g} = e^{2u}\bar{g}$ for a flat metric $\bar{g}$. Then by Lemma \ref{ConfChange},
\[
S(\hat{g}) = e^{-2u}(S(\bar{g}) -2(n-1)\Delta u  - (n-2)(n-1)|\bar{\nabla}u|^2)
\]
and
\begin{equation}
\label{ConfCurv}
\widehat{Rm} = e^{2u}\bar{Rm} - e^{2u}\bar{g}\owedge(\mathrm{Hess}_{\bar{g}}(u) - du^2 + \frac{1}{2}|\nabla u|^2\bar{g}).
\end{equation}
Since $\bar{g}$ is flat, we know $S(\bar{g}) = 0$ and $\bar{Rm} = 0$.
From this, a computation shows that
\[
-\frac{S(\hat{g})}{n(n-1)} = \left( \frac{2}{n}\Delta u + \left(1 - \frac{2}{n}\right)|\bar{\nabla}u|^2 \right)\bar{g}.
\]
The full second fundamental form at infinity can now be simplified to 
\begin{align}
2\mathrm{OS}(\hat{g},\bar{g}) &- \frac{S(\hat{g})}{n(n-1)}\hat{g} \nonumber \\
&= 2\mathrm{Hess}_{\bar{g}}(u) - 2du^2 - \frac{2}{n}\left(\Delta u - |\bar{\nabla}u|^2\right)\bar{g} + \left( \frac{2}{n}\Delta u + \left(1 - \frac{2}{n}\right)|\bar{\nabla}u|^2 \right)\bar{g} \nonumber \\
&= 2\mathrm{Hess}_{\bar{g}}(u) - 2 du^2 + |\bar{\nabla}u|^2 \bar{g}, \label{OSu}
\end{align}
and then substituting this into \eqref{ConfCurv} shows that $\widehat{Rm} = -\frac{1}{2}\hat{g}\owedge \hat{\two}$.
Consequently, $\hat{B}$ defined in the theorem satisfies the Gauss equation at infinity.
\end{proof}

\begin{thm}
\label{MainThmCodazzi}
Let $\hat{g}$ be a locally conformally flat metric on $M$. Then for 
\[
\hat{g} \hat{B} = 2\mathrm{OS}(\hat{g}) - \frac{S(\hat{g})}{n(n-1)}\hat{g},
\]
$(\hat{g},\hat{B})$ solves the Codazzi equation at infinity.
\end{thm}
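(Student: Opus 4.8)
The Codazzi equation at infinity requires $d^{\widehat{\nabla}}\hat{B} = 0$, which by the relation between the exterior covariant derivatives of $\hat{B}$ and of $\hat{g}\hat{B}$ recorded in the preliminaries is equivalent to $d^{\widehat{\nabla}}\hat{\two} = 0$ for $\hat{\two} = \hat{g}\hat{B}$. Since this is a local statement, I would fix a chart on which $\hat{g} = e^{2u}\bar{g}$ with $\bar{g}$ flat, and recall from the proof of Theorem \ref{MainThmGauss} (specifically \eqref{OSu}) that on this chart
\[
\hat{\two} = 2\,\mathrm{Hess}_{\bar{g}}(u) - 2\,du^2 + |\bar{\nabla}u|^2\bar{g},
\]
with $\bar{\nabla}$, $\mathrm{Hess}$, and $|\cdot|$ all taken with respect to $\bar{g}$. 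The plan is to compute $d^{\widehat{\nabla}}\hat{\two}$ explicitly and verify it vanishes.

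First I would use Lemma \ref{dRel} to rewrite
\[
d^{\widehat{\nabla}}\hat{\two}(X,Y,Z) = d^{\bar{\nabla}}\hat{\two}(X,Y,Z) + \hat{\two}\owedge\bar{g}\,(\bar{\nabla}u,X,Y,Z),
\]
and evaluate $d^{\bar{\nabla}}\hat{\two}$ summand by summand. The Hessian summand gives $2\,d^{\bar{\nabla}}\mathrm{Hess}_{\bar{g}}(u) = 2\,\bar{Rm}(\bar{\nabla}u,\cdot,\cdot,\cdot) = 0$ by Lemma \ref{Hess} together with the flatness of $\bar{g}$. For the summand $-2\,du^2$, the product rule for $\bar{\nabla}(du\otimes du)$ and the symmetry of the Hessian give $d^{\bar{\nabla}}(du^2)(X,Y,Z) = \mathrm{Hess}(X,Y)\,du(Z) - \mathrm{Hess}(X,Z)\,du(Y)$. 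For the summand $|\bar{\nabla}u|^2\bar{g}$, using $\bar{\nabla}\bar{g}=0$ and $W(|\bar{\nabla}u|^2) = 2\,\mathrm{Hess}(\bar{\nabla}u,W)$ one gets $d^{\bar{\nabla}}(|\bar{\nabla}u|^2\bar{g})(X,Y,Z) = 2\,\mathrm{Hess}(\bar{\nabla}u,Y)\bar{g}(X,Z) - 2\,\mathrm{Hess}(\bar{\nabla}u,Z)\bar{g}(X,Y)$.

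Then I would expand the correction term from the definition of the Kulkarni-Nomizu product, using $\bar{g}(\bar{\nabla}u,\cdot) = du$ and the identity $\hat{\two}(\bar{\nabla}u,\cdot) = 2\,\mathrm{Hess}(\bar{\nabla}u,\cdot) - |\bar{\nabla}u|^2\,du$, obtained by feeding $\bar{\nabla}u$ into the formula for $\hat{\two}$ and using $du(\bar{\nabla}u) = |\bar{\nabla}u|^2$. Adding everything up, the terms organize into three groups that each cancel: the $\mathrm{Hess}\cdot du$ terms coming from $-2\,d^{\bar{\nabla}}(du^2)$ against the matching terms of the correction; the $\mathrm{Hess}(\bar{\nabla}u,\cdot)\otimes\bar{g}$ terms from $d^{\bar{\nabla}}(|\bar{\nabla}u|^2\bar{g})$ against theirs; and the leftover terms, which are cubic in $du$ or of the shape $|\bar{\nabla}u|^2\,du\otimes\bar{g}$, among themselves. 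This yields $d^{\widehat{\nabla}}\hat{\two} = 0$ on the chart, hence globally.

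There is no conceptual difficulty here; the main obstacle is the bookkeeping of the roughly dozen terms and arranging them into cancelling groups. (One could argue more conceptually: Theorem \ref{MainThmGauss} gives $\widehat{Rm} = -\tfrac12\hat{g}\owedge\hat{\two}$, while local conformal flatness forces $\widehat{Rm} = \hat{g}\owedge\hat{P}$ for the Schouten tensor $\hat{P}$, so injectivity of $T\mapsto\hat{g}\owedge T$ for $n\geq 3$ identifies $\hat{\two}$ with a constant multiple of $\hat{P}$, and the Codazzi property then follows from that of the Schouten tensor of a conformally flat metric; but since reproving the latter is precisely the point of our application to the Schouten-Weyl theorem, I would retain the direct computation above to avoid circularity.)
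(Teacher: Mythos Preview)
Your proposal is correct and follows essentially the same approach as the paper: both reduce to the local expression \eqref{OSu}, apply Lemma~\ref{dRel} to pass from $d^{\widehat{\nabla}}$ to $d^{\bar{\nabla}}$, use Lemma~\ref{Hess} and flatness to kill $d^{\bar{\nabla}}\mathrm{Hess}$, and then verify the remaining terms cancel. The only cosmetic difference is the grouping: the paper separates the Hessian summand from $-2du^2 + |\bar{\nabla}u|^2\bar{g}$ and shows $d^{\widehat{\nabla}}$ of the latter equals the negative of $d^{\widehat{\nabla}}$ of the former (with the correction term $(-2du^2 + |\bar{\nabla}u|^2\bar{g})\owedge\bar{g}(\bar{\nabla}u,\cdot)$ vanishing on its own), whereas you apply Lemma~\ref{dRel} to the full $\hat{\two}$ at once and then sort the resulting terms into three cancelling groups. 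Your parenthetical remark about avoiding the Schouten-tensor shortcut for the sake of the Weyl--Schouten application is exactly the paper's implicit rationale as well.
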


\begin{proof}
Because $d^{\widehat{\nabla}}(g^{-1}T) = 0$ if and only if $d^{\widehat{\nabla}}T = 0$ for any symmetric 2-tensor $T$, it suffices to compute the derivative of $2\mathrm{OS} + \frac{1}{n-n^2}\hat{S}\hat{g}$ and show it vanishes.
And, since terms in the Osgood-Stowe tensor are (locally) computed relative to a flat metric $\bar{g}$, we can use Lemma \ref{dRel} to instead compute things in terms of $d^{\bar{\nabla}}$.
In fact, the computation becomes more manageable once we simplify our second fundamental form at infinity so that every thing is in terms of $\bar{g}$.
And this has already been done in \eqref{OSu}.

Here is the idea of the computation:
From Lemma \ref{dRel} we can write
\begin{equation}
\label{ToComp}
d^{\widehat{\nabla}}(- 2 du^2 + |\bar{\nabla}u|^2 \bar{g}) = d^{\bar{\nabla}}(- 2 du^2 + |\bar{\nabla}u|^2 \bar{g}) + (- 2 du^2 + |\bar{\nabla}u|^2 \bar{g})\owedge \bar{g}.
\end{equation}
The first term on the right hand side will be the negative of $d^{\widehat{\nabla}}(2\mathrm{Hess}_{\bar{g}}(u))$ while the the second term has the good sense to vanish. 
This means $d^{\widehat{\nabla}}(- 2 du^2 + |\bar{\nabla}u|^2 \bar{g}) = - d^{\widehat{\nabla}}(2\mathrm{Hess}_{\bar{g}}(u))$, implying the second fundamental form satisfies the Codazzi equation at infinity.

We again write $\mathrm{Hess}$ for $\mathrm{Hess}_{\bar{g}}(u)$ to ease notation. 
From Lemmas \ref{dRel} and \ref{Hess}, 
\begin{equation}
\label{SimpDerHess}
\begin{aligned}
d^{\widehat{\nabla}}2\mathrm{Hess}(X,Y,Z)
&= d^{\bar{\nabla}}2\mathrm{Hess}(X,Y,Z) + 2\mathrm{Hess}\owedge \bar{g}(\bar{\nabla}u,X,Y,Z) \\
&= 2\bar{Rm}(\bar{\nabla}u,X,Y,Z) + 2\mathrm{Hess}\owedge \bar{g}(\bar{\nabla}u,X,Y,Z), \\
&= 2\mathrm{Hess}\owedge \bar{g}(\bar{\nabla},X,Y,Z)
\end{aligned}
\end{equation}
since $\bar{Rm} = 0$ because $\bar{g}$ is flat. 
As for $d^{\bar{\nabla}}(- 2 du^2 + |\bar{\nabla}u|^2 \bar{g})$, we compute the derivative of each term separately.
Using that $du^2(U,V) = du(U)du(V)$ for any two vector fields $U$ and $V$, we have that the first term here is 
\begin{align*}
d^{\bar{\nabla}}(du^2)(X,Y,Z)
&= Y(du(X)du(Z)) - du(\bar{\nabla}_YX)du(Z) - du(X)du(\bar{\nabla}_YZ) \\
&\phantom{=} - Z(du(X)du(Y)) + du(\bar{\nabla}_ZX)du(Y) + du(X)du(\bar{\nabla}_ZY) \\[2mm]
&= (Ydu(X) - du(\bar{\nabla}_YX))du(Z) + du(X)(Ydu(Z)-du(\bar{\nabla}_YZ)) \\
&\phantom{=} - (Zdu(X) - du(\bar{\nabla}_ZX))du(Y) - du(X)(Zdu(Y)-du(\bar{\nabla}_ZY)).
\end{align*}
Each expression in parentheses is a Hessian, and using that the Hessian is a symmetric tensor lets us cancel the two terms that are equal and get 
\begin{equation}
\label{Derdu}
d^{\bar{\nabla}}(-2du^2)(X,Y,Z) = -2\mathrm{Hess}(X,Y)du(Z) +2\mathrm{Hess}(X,Z)du(Y).
\end{equation}
For the second term, writing out $d^{\bar{\nabla}}|\bar{\nabla}u|^2 \bar{g}$ and using the product rule reveals that 
\begin{align*}
d^{\bar{\nabla}}(|\bar{\nabla}u|^2 \bar{g})(X,Y,Z) 
&= Y(|\bar{\nabla}u|^2)\bar{g}(X,Z) - |\bar{\nabla}u|^2 (\bar{\nabla}_Z\bar{g})(X,Y) \\
&\phantom{=} - Z(|\bar{\nabla}u|^2)\bar{g}(X,Y) - |\bar{\nabla}u|^2(\bar{\nabla}_Y\bar{g})(X,Z).
\end{align*}
Because the connection is compatible with the metric, $\bar{\nabla}\bar{g} = 0$. 
Moreover, by writing $|\bar{\nabla}u|^2 = \bar{g}(\bar{\nabla} u, \bar{\nabla}u)$ we can see that the derivative of $|\bar{\nabla}u|^2$ is twice the Hessian.
So, 
\begin{equation}
\label{DerNorm}
d^{\bar{\nabla}}(|\bar{\nabla} u|^2\bar{g})(X,Y,Z)
= 2\mathrm{Hess}(\bar{\nabla}u,Y)\bar{g}(X,Z) - 2\mathrm{Hess}(\bar{\nabla}u,Z)\bar{g}(X,Y).
\end{equation}

By summing \eqref{Derdu} and \eqref{DerNorm}, we obtain the negative of \eqref{SimpDerHess}.
It only remains to show that the second term in \eqref{ToComp} vanishes. 
To this end, we will show $2du^2\owedge\bar{g} = |\bar{\nabla}u|^2\bar{g}\owedge \bar{g}$ when applied $(\bar{\nabla}u,X,Y,Z)$.
Indeed,
\begin{align*}
2du^2\owedge\bar{g}(\bar{\nabla}u,X,Y,Z)
&= 2du(\bar{\nabla}u)du(Z)\bar{g}(X,Y) + 2du(X)du(Y)\bar{g}(\bar{\nabla}u,Z) \\
&\phantom{=} - 2du(\bar{\nabla}u)du(Y)\bar{g}(X,Z) - 2du(X)du(Z)\bar{g}(\bar{\nabla}u,Y) \\[5pt]
&= 2|\bar{\nabla}u|^2\bar{g}(\bar{\nabla}u,Z)\bar{g}(X,Y) + 2du(X)du(Y)du(Z) \\
&\phantom{=} - 2|\bar{\nabla}u|^2\bar{g}(\bar{\nabla}u,Y)\bar{g}(X,Z) -2du(X)du(Y)du(Z).
\end{align*}
After cancelling the second and fourth terms, we recognize this as $|\bar{\nabla}u|^2\bar{g}\owedge\bar{g}$ evaluated at $(\bar{\nabla}u,X,Y,Z)$, as claimed.
Consequently, $(- 2 du^2 + |\bar{\nabla}u|^2 \bar{g})\owedge \bar{g} = 0$ on $(\bar{\nabla}u,X,Y,Z)$ and we have that \eqref{OSu} solves the Codazzi equation at infinity.


\end{proof}

\begin{thm}
\label{Uniqueness}
Let $\hat{g}$ be a locally conformally flat metric on $M$ which has dimension $n \geq 3$.
Then the solution $\hat{B}$ to the Gauss-Codazzi equations at infinity is unique.
\end{thm}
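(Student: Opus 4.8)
The plan is to reduce the uniqueness of $\hat{B}$ to the uniqueness of the second fundamental form $\hat{\two} = \hat{g}\hat{B}$, and then to show that $\hat{\two}$ is completely determined by the Gauss equation at infinity alone. Suppose $(\hat{g},\hat{B}_1)$ and $(\hat{g},\hat{B}_2)$ both solve the Gauss-Codazzi equations at infinity, with corresponding second fundamental forms $\hat{\two}_1$ and $\hat{\two}_2$. Set $T = \hat{\two}_1 - \hat{\two}_2$. Subtracting the two Gauss equations at infinity $\widehat{Rm} = -\tfrac12 \hat{g}\owedge\hat{\two}_i$ gives $\hat{g}\owedge T = 0$. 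The key algebraic step is then to observe that, in dimension $n \geq 3$, the Kulkarni-Nomizu product $\hat{g}\owedge(\cdot)$ with the (nondegenerate) metric $\hat{g}$ is injective on symmetric $2$-tensors: taking the $\hat{g}$-trace of $\hat{g}\owedge T = 0$ and using \eqref{TraceKN} yields $(n-2)T + \tr_{\hat{g}}(T)\hat{g} = 0$; tracing once more gives $(n-2)\tr_{\hat{g}}(T) + n\tr_{\hat{g}}(T) = 0$, i.e. $2(n-1)\tr_{\hat{g}}(T) = 0$, so $\tr_{\hat{g}}(T) = 0$, and feeding this back gives $(n-2)T = 0$, hence $T = 0$ because $n \geq 3$. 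Therefore $\hat{\two}_1 = \hat{\two}_2$, and since $\hat{B}_i = \hat{g}^{-1}\hat{\two}_i$ we conclude $\hat{B}_1 = \hat{B}_2$.

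I would then remark that this argument shows something slightly stronger than advertised: the Gauss equation at infinity \emph{by itself} already pins down $\hat{B}$ (the Codazzi equation is not needed for uniqueness), and combined with Theorems \ref{MainThmGauss} and \ref{MainThmCodazzi} it identifies that unique solution explicitly as $\hat{g}\hat{B} = 2\mathrm{OS}(\hat{g}) - \tfrac{S(\hat{g})}{n(n-1)}\hat{g}$. It is also worth noting where $n \geq 3$ is essential: when $n = 2$ the factor $(n-2)$ vanishes and $\hat{g}\owedge(\cdot)$ has a kernel, which is exactly why in the surface case an extra piece of data — a choice of complex projective structure entering through $\mathrm{OS}(\hat{g})$ — is required to single out $\hat{B}$, consistent with the remark preceding Theorem \ref{MainThmGauss}.

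I do not expect a serious obstacle here; the only thing to be careful about is the trace bookkeeping in \eqref{TraceKN}, namely that the trace is taken on the first and last indices as defined in the Preliminaries, so that $\tr_{\hat{g}}(\hat{g}\owedge T)$ really is $(n-2)T + \tr_{\hat{g}}(T)\hat{g}$ and the double trace of $\hat{g}\owedge T$ comes out to $2(n-1)\tr_{\hat{g}}(T)$. Once that is in hand the conclusion is immediate. If one prefers a coordinate-free phrasing, the same fact can be stated as: the linear map $S \mapsto \hat{g}\owedge S$ on symmetric $2$-tensors is injective for $n \neq 2$, which is a standard consequence of the decomposition of algebraic curvature tensors; I would cite or include the short trace computation above for self-containedness.
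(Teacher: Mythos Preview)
Your proof is correct and follows essentially the same route as the paper: subtract the two Gauss equations at infinity to get $\hat{g}\owedge T=0$, apply the trace identity \eqref{TraceKN} to obtain $(n-2)T+\tr_{\hat{g}}(T)\hat{g}=0$, trace again to force $\tr_{\hat{g}}(T)=0$, and conclude $T=0$ since $n\geq 3$. Your additional remarks---that only the Gauss equation is used for uniqueness and that the $n=2$ failure comes from the vanishing factor $(n-2)$---are accurate and align with the paper's subsequent discussion.
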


\begin{proof}
Assume $\hat{\two} = h$ and $\hat{\two} = k$ are two solutions to $\widehat{Rm} = (-1/2)\hat{g}\owedge\hat{\two}$.
Then we can write $\hat{g}\owedge (h-k) = 0$ and, using the trace identity \eqref{TraceKN} for the Kulkarni-Nomizu product, we have $(n-2)(h-k) + \tr_{\hat{g}}(h-k)\hat{g} = 0$.
Take a further trace of this equation to get $2(n-1)\tr_{\hat{g}}(h-k) = 0$, which implies $\tr_{\hat{g}}(h-k) = 0$.
Substituting this into the first trace identity gives $(n-2)(h-k) = 0$. 
As $n \neq 2$, this says $h=k$ and the solution is unique.
\end{proof}

Theorem \ref{Uniqueness} is not true in dimension $n = 2$ due to the distinction between complex structures and projective structures in this setting.
What is true is the following.
Fix a conformal metric $\hat{g}$ on the Riemann surface $M$.
Then for each solution $\hat{B}$ to the Gauss-Codazzi equations at infinty, there exists a unique projective structure such that $\hat{B} = 2\mathrm{OS}(\hat{g}) - K(\hat{g})\hat{g}$.
In particular, the solutions for $\hat{B}$ are in bijection with the set of projective structures compatible with $M$.
See \cite{Bridgeman-Bromberg2022} for the statements and proofs in this case.

The previous theorems give the following corollary in dimensions $n \geq 3$, which is Theorem \ref{bigthm1} from the introduction. 
\begin{cor}
\label{MainThm}
A pair $(\hat{g},\hat{B})$ solves the Gauss-Codazzi equations at infinity if and only if the metric $\hat{g}$ is locally conformally flat and
\[
\hat{g} \hat{B} = 2\mathrm{OS}(\hat{g}) - \frac{S(\hat{g})}{n(n-1)}\hat{g},
\]
where $\mathrm{OS}(\hat{g})$ is the Osgood-Stowe tensor of $\hat{g}$ with respect to the induced locally conformally flat structure.
\end{cor}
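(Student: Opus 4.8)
The plan is to assemble Corollary~\ref{MainThm} directly from the three preceding theorems, with no new computation required. The statement is a biconditional, so I would prove the two directions separately.

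\textbf{The forward direction.} Suppose $(\hat g,\hat B)$ solves the Gauss-Codazzi equations at infinity. First invoke Proposition~\ref{GCInf-LCF} to conclude that $\hat g$ is locally conformally flat; this is exactly what is needed to make sense of the globally defined tensor $\mathrm{OS}(\hat g)$ (using the patching argument from the Conformally Flat and M\"obius Structures subsection, which relies on Liouville's theorem since $n\geq 3$). Now that $\hat g$ is locally conformally flat, Theorems~\ref{MainThmGauss} and~\ref{MainThmCodazzi} together show that the particular tensor $\hat B_0$ defined by $\hat g\hat B_0 = 2\mathrm{OS}(\hat g) - \frac{S(\hat g)}{n(n-1)}\hat g$ also solves \ref{GCInf}. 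Finally apply Theorem~\ref{Uniqueness}, which uses $n\neq 2$ crucially, to conclude $\hat B = \hat B_0$, i.e.\ $\hat g\hat B$ has the claimed form.

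\textbf{The reverse direction.} Suppose $\hat g$ is locally conformally flat and $\hat g\hat B = 2\mathrm{OS}(\hat g) - \frac{S(\hat g)}{n(n-1)}\hat g$. Then Theorems~\ref{MainThmGauss} and~\ref{MainThmCodazzi} say precisely that this $(\hat g,\hat B)$ satisfies the Gauss equation at infinity and the Codazzi equation at infinity, respectively; together these are \ref{GCInf}. This direction is essentially immediate.

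I do not anticipate a genuine obstacle: the corollary is a repackaging, and the only subtlety is bookkeeping — being careful that the Osgood-Stowe tensor appearing in Theorems~\ref{MainThmGauss} and~\ref{MainThmCodazzi}, where it is computed locally against an arbitrary flat chart metric $\bar g$, is the same globally well-defined $\mathrm{OS}(\hat g)$ that appears in the corollary's statement. That identification was already established in the discussion following the cocycle and naturality properties of $\mathrm{OS}$, so I would simply cite it. One might also remark, as the paper's surrounding text does, that the $n=2$ case fails for uniqueness (Theorem~\ref{Uniqueness} requires $n\neq 2$) and hence the corollary is stated for $n\geq 3$; I would restrict to that range at the outset of the proof.
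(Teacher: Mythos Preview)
Your proposal is correct and matches the paper's approach exactly: the paper does not even write out a proof, stating only that the corollary follows from the previous theorems in dimensions $n\geq 3$, and your argument spells out precisely that assembly (Proposition~\ref{GCInf-LCF} for the forward direction's flatness, Theorems~\ref{MainThmGauss} and~\ref{MainThmCodazzi} for existence, and Theorem~\ref{Uniqueness} for the identification of $\hat B$). The bookkeeping remarks you include about the global definition of $\mathrm{OS}(\hat g)$ and the restriction to $n\geq 3$ are appropriate and consistent with the paper's surrounding discussion.
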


\section{The Weyl-Schouten Theorem}

Given a Riemannian metric $g$ on a smooth manifold $M$, one forms the Ricci tensor by taking the trace of the full curvature tensor $Ric(g) = \tr_g(Rm(g))$.
Here the trace may be thought of as a linear operator from $\mathcal{R}(T^*M)$, the sub-vector bundle of 4-tensors with the same symmetries as the curvature tensor, to $\Sigma^2(T^*M)$, the set of symmetric 2-tensors.
For $n\geq 3$, a right inverse of the trace $\tr_g: R(T^*M) \to \Sigma^2(T^*M)$ is given by 
\[
G(h) = \frac{1}{n-2}\left( h - \frac{\tr_g(h)}{2(n-1)} g \right) \owedge g,
\]
and the image of $G$ is the orthogonal complement to $\ker(\tr_g)$.

The Schouten tensor of $g$ is the symmetric 2-tensor $P(g)$ satisfying $G(Ric) = P(g) \owedge g$, i.e., 
\[
P(g) = \frac{1}{n-2}\left( Ric(g) - \frac{S(g)}{2(n-1)} g \right),
\]
and decomposing the curvature tensor via $\mathcal{R}(T^*M) = \ker(\tr_g) \oplus \ker(\tr_g)^\perp$ defines the Weyl tensor $W(g)$ by 
\begin{equation}
\label{Weyl}
Rm(g) = W(g) + P(g)\owedge g.
\end{equation}
One sees that 
\[
W(g) = Rm(g) - \frac{1}{n-2}Ric(g) \owedge g + \frac{S(g)}{2(n-1)(n-2)} g \owedge g.
\]
Note that neither the Weyl nor Schouten tensor exist in dimension $n = 2$.
\begin{lem}
\label{SchoutenSolves}
Suppose $g$ is a locally conformally flat metric on a manifold $M$ of dimension $n \geq 3$. 
Then 
\[
2\mathrm{OS}(g) - \frac{S(g)}{n(n-1)}g = -2 P(g). 
\]
In particular, $(g,-2P)$ solves the Gauss-Codazzi equations at infinity if and only if $\hat{g}$ is locally conformally flat. 
\end{lem}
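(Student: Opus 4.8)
The plan is to verify the identity $2\,\mathrm{OS}(g) - \frac{S(g)}{n(n-1)}g = -2P(g)$ by working in a local conformal chart and comparing with the conformal-change formula for the curvature tensor. Since $g$ is locally conformally flat, near any point write $g = e^{2u}\bar g$ for a flat metric $\bar g$. By Lemma \ref{ConfChange} with $\bar{Rm} = 0$, the full curvature of $g$ is
\[
Rm(g) = -g \owedge \Bigl(\mathrm{Hess}_{\bar g}(u) - du^2 + \tfrac12 |\bar\nabla u|^2 \bar g\Bigr).
\]
On the other hand, decomposing the curvature via \eqref{Weyl} gives $Rm(g) = W(g) + P(g)\owedge g$, and for a locally conformally flat metric the Weyl tensor vanishes (this is one direction of the Weyl–Schouten theorem; alternatively, since $Rm(g)$ is visibly of the form $(\text{symmetric 2-tensor})\owedge g$, it lies in the image of $G$ and hence $W(g)=0$). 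Therefore $P(g)\owedge g = -g\owedge(\mathrm{Hess}_{\bar g}(u) - du^2 + \tfrac12|\bar\nabla u|^2\bar g)$.

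Next I would use that the map $T \mapsto T \owedge g$ is injective on symmetric 2-tensors when $n \geq 3$ — this follows from the trace identity \eqref{TraceKN}, exactly as in the proof of Theorem \ref{Uniqueness}: if $T\owedge g = 0$ then $(n-2)T + \tr_g(T)g = 0$, tracing gives $\tr_g(T)=0$, and then $T=0$. Applying this to $P(g)\owedge g = -\mathrm{Hess}_{\bar g}(u)\owedge g + (du^2 - \tfrac12|\bar\nabla u|^2\bar g)\owedge g$ — wait, more carefully, both sides are of the form $S\owedge g$, so injectivity yields
\[
P(g) = -\Bigl(\mathrm{Hess}_{\bar g}(u) - du^2 + \tfrac12 |\bar\nabla u|^2 \bar g\Bigr).
\]
Then I would compare with the expression for $2\,\mathrm{OS}(g) - \frac{S(g)}{n(n-1)}g$ already computed in equation \eqref{OSu}, namely $2\mathrm{Hess}_{\bar g}(u) - 2\,du^2 + |\bar\nabla u|^2\bar g$. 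This is exactly $-2$ times the bracketed expression, giving $2\mathrm{OS}(g) - \frac{S(g)}{n(n-1)}g = -2P(g)$, as desired. Note the left-hand side is a priori chart-independent (it is the globally defined tensor from the discussion of $\mathrm{OS}(g)$ in Section 2), and the right-hand side is manifestly intrinsic, so the local computation suffices.

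For the second sentence of the lemma: by Corollary \ref{MainThm} (equivalently Theorems \ref{MainThmGauss} and \ref{MainThmCodazzi}), a pair $(g,\hat B)$ solves the Gauss–Codazzi equations at infinity if and only if $g$ is locally conformally flat and $g\hat B = 2\mathrm{OS}(g) - \frac{S(g)}{n(n-1)}g$, which by the first part equals $-2P(g)$; since $\hat B = g^{-1}(g\hat B)$, this says $\hat B = -2P$. So $(g,-2P)$ solves the equations at infinity precisely when $g$ is locally conformally flat. (The statement's "if and only if $\hat g$ is locally conformally flat" should read "$g$", matching the hypothesis.) The main obstacle is simply keeping the conformal-change bookkeeping straight — establishing vanishing of $W(g)$ here and invoking injectivity of $\owedge g$ — but all the needed computations have effectively been done in \eqref{OSu} and the proof of Theorem \ref{Uniqueness}.
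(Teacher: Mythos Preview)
Your proof is correct but proceeds differently from the paper. The paper's argument is shorter: it asserts that the traceless Ricci tensor satisfies $Ric_0(g) = -(n-2)\,\mathrm{OS}(g,\bar g)$ for $g$ locally conformal to a flat $\bar g$, then substitutes this into $P(g) = \frac{1}{n-2}\bigl(Ric_0(g) + \frac{1}{n}S(g)g - \frac{S(g)}{2(n-1)}g\bigr)$ and simplifies directly. Your route instead passes through the full curvature tensor: you show $Rm(g)$ has the form $T\owedge g$, deduce $W(g)=0$ from the decomposition $\mathcal{R}(T^*M)=\ker(\tr_g)\oplus\ker(\tr_g)^\perp$, use injectivity of $T\mapsto T\owedge g$ (as in Theorem~\ref{Uniqueness}) to identify $P(g)$, and then match against \eqref{OSu}. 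Your approach has the virtue of recycling \eqref{OSu} and the injectivity argument verbatim, and your justification of $W(g)=0$ via the image of $G$ is important here since invoking Weyl--Schouten would be circular; the paper's approach is more compact but leaves the $Ric_0$ identity as an exercise. For the second sentence both arguments are the same (Corollary~\ref{MainThm} one way, Proposition~\ref{GCInf-LCF} the other), and you are right that the ``$\hat g$'' in the statement is just $g$ playing the role of the metric at infinity.
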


\begin{proof}
The traceless Ricci tensor $Ric_0(g)$ for a metric $g$ locally conformal to a flat metric $\bar{g}$ satisfies $Ric_0(g) = -(n-2)\mathrm{OS}(g,\bar{g})$.
Substituting this into
\[
P(g) = \frac{1}{n-2}\left( Ric_0(g) + \frac{1}{n}S(g)g - \frac{S(g)}{2(n-1)} g \right)
\]
and simplifying gives the equality.
That $(g,-2P)$ solves \ref{GCInf} then follows from Corollary \ref{MainThm} and conversely if $(g,-2P)$ solves \ref{GCInf} then Proposition \ref{GCInf-LCF} shows $\hat{g}$ is locally conformally flat. 
\end{proof}

Because of this, one can interpret the solutions $2\mathrm{OS}(g) - K(g)g$ as a type of extension of the Schouten tensor to $n=2$ dimensions, for conformal metrics $g$ on Riemann surfaces.

The previous lemma can also be seen from the works of Espinar, G\'alvez, and Mira in \cite{Espinar-Galvez-Mira2009} where they consider immersed hypersurfaces induced by locally conformally flat metrics $\hat{g}$, and give the shape operator $B$ of the immersion in terms of the Schouten tensor of the metric $\hat{g}$. 
This can be inverted to give the shape operator at infinity $\hat{B}$ in terms of the Schouten tensor. 

\begin{thm}[Weyl-Schouten]
Let $g$ be a Riemannian metric on an $n$ dimensional manifold.

\begin{enumerate}
\item If $n = 3$ then $g$ is locally conformally flat if and only if $d^\nabla P = 0$.

\item If $n \geq 4$ then $g$ is locally conformally flat if and only if $W(g) = 0$.

\end{enumerate}
\end{thm}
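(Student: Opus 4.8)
The plan is to leverage the duality machinery developed in the paper, specifically Lemma \ref{SchoutenSolves} together with Theorems \ref{MainThmGauss}, \ref{MainThmCodazzi}, and \ref{Uniqueness}, so that the classical Weyl-Schouten dichotomy falls out of a single uniform argument about solutions to the Gauss-Codazzi equations at infinity. The starting observation is that by Lemma \ref{SchoutenSolves}, whenever $g$ is locally conformally flat the pair $(g,-2P(g))$ solves \ref{GCInf}, and conversely by Proposition \ref{GCInf-LCF} any pair $(g,\hat B)$ solving \ref{GCInf} forces $g$ to be locally conformally flat. So I want to recast ``$g$ locally conformally flat'' as ``$(g,-2P(g))$ solves \ref{GCInf}'' and then unwind what the Gauss and Codazzi halves of \ref{GCInf} say in terms of the curvature tensors of $g$ itself.

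First I would observe that the Gauss equation at infinity for the pair $(g,-2P(g))$ reads $Rm(g) = -\tfrac12 g \owedge \widehat{\two}$ where $\widehat{\two} = g(-2P)$, i.e. $Rm(g) = g \owedge P(g)$; comparing with the Weyl decomposition \eqref{Weyl}, $Rm(g) = W(g) + P(g)\owedge g$, this is exactly the statement $W(g) = 0$. So the Gauss half of \ref{GCInf} for $(g,-2P)$ is equivalent to the vanishing of the Weyl tensor, with \emph{no} restriction on dimension yet. Next, the Codazzi half $d^\nabla(-2P) = 0$ is literally $d^\nabla P = 0$, i.e. the Schouten tensor is Codazzi. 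Now I invoke the two cases of Theorem \ref{Uniqueness}-adjacent structure: for $n \geq 4$ a standard fact (the contracted second Bianchi identity / Weyl-Schouten algebra) gives that $W(g) = 0$ automatically implies $d^\nabla P(g) = 0$, whereas for $n = 3$ one has $W(g) \equiv 0$ identically, so the only content of \ref{GCInf} is the Codazzi condition. Thus in either case ``$(g,-2P)$ solves \ref{GCInf}'' is equivalent to the single relevant condition: $d^\nabla P = 0$ when $n=3$, and $W(g) = 0$ when $n \geq 4$.

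Assembling the pieces: if $g$ is locally conformally flat then $(g,-2P)$ solves \ref{GCInf} by Lemma \ref{SchoutenSolves}, hence both $W(g)=0$ (for $n\geq 4$, from the Gauss half) and $d^\nabla P = 0$ (for $n = 3$, from the Codazzi half). Conversely, suppose the relevant tensor vanishes. For $n = 3$, if $d^\nabla P = 0$ then since $W(g) = 0$ automatically, the pair $(g, -2P)$ satisfies both equations of \ref{GCInf}; for $n \geq 4$, if $W(g) = 0$ then by the contracted Bianchi identity $d^\nabla P = 0$ as well, so again $(g,-2P)$ solves \ref{GCInf}. In both cases Proposition \ref{GCInf-LCF} then yields that $g$ is locally conformally flat, completing the equivalence. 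The uniqueness of the solution $\hat B$ from Theorem \ref{Uniqueness} guarantees that $-2P$ is \emph{the} solution, which is what makes the correspondence between ``solves \ref{GCInf}'' and ``$g$ is LCF'' clean rather than merely a one-directional implication.

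The main obstacle I anticipate is establishing the dimension-dependent implication $W(g) = 0 \Rightarrow d^\nabla P(g) = 0$ for $n \geq 4$ (and its degenerate companion $W(g) \equiv 0$ for $n = 3$) within the framework of this paper, since these are precisely the classical facts the theorem is usually proved \emph{from}. I would handle this by taking a trace of the Weyl decomposition combined with the second Bianchi identity for $Rm(g)$: contracting $Rm(g) = W(g) + P(g)\owedge g$ and using that the divergence of $Rm$ is controlled by $d^\nabla Ric$ (hence by $d^\nabla P$), one finds that $\mathrm{div}\, W$ is a nonzero multiple of $d^\nabla P$ when $n \geq 4$; so $W = 0$ kills $d^\nabla P$. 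The $n = 3$ case is the elementary dimension count that $\mathcal{R}(T^*M) = \ker(\tr_g)^\perp$ when $n=3$, so $W(g)$, living in $\ker(\tr_g)$, must vanish. I should be careful to phrase these so they do not secretly re-assume the theorem; the honest statement is that one arithmetic identity (divergence of the Weyl decomposition) does the work, and it is genuinely independent of the local-conformal-flatness question.
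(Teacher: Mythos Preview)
Your proposal is correct and follows essentially the same route as the paper: reduce to the biconditional ``$g$ is locally conformally flat $\iff$ $(g,-2P)$ solves \ref{GCInf}'' via Lemma \ref{SchoutenSolves}, identify the Gauss half with $W=0$ via \eqref{Weyl} and the Codazzi half with $d^\nabla P=0$, then dispose of the redundant condition in each dimension using $W\equiv 0$ for $n=3$ and the divergence identity $d^\nabla P = -\tfrac{1}{n-3}\tr_g(\nabla W)$ for $n\geq 4$. The only superfluous ingredient in your write-up is the appeal to Theorem \ref{Uniqueness}: the reverse implication (solving \ref{GCInf} forces $g$ LCF) comes directly from Proposition \ref{GCInf-LCF} and needs no uniqueness statement, so you can safely drop that remark.
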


\begin{proof}
Let $n \geq 3$. 
By Lemma \ref{SchoutenSolves}, the metric $g$ is locally conformally flat if and only if $(g,-2P)$ solves the Gauss-Codazzi equations at infinity.
This happens if and only if $d^{\nabla}P = 0$ and 
\[
Rm = -\frac{1}{2}g\owedge (-2P) = 0 + g \owedge P,
\]
which happens if and only if $d^\nabla P = 0$ and $W = 0$ by \eqref{Weyl}.

When $n = 3$, for dimension reasons, the Weyl tensor of any metric will always vanish (see, for example, \cite{Lee2018}).
So in dimension 3, the condition $d^\nabla P = 0$ is enough for the equivalence.
When $n \geq 4$, the identity
\[
d^\nabla P = -\frac{1}{n-3}\tr_g(\nabla W)
\]
shows that $d^\nabla P$ will vanish if $W=0$, so this condition is enough for the equivalence when $n\geq4$.
\end{proof}

\end{document}